\newtheorem{Theorem}{Theorem}
\newtheorem{Proposition}{Proposition}
\newtheorem{corollary}{Corollary}
\newtheorem{Lemma}{Lemma}
\newtheorem{Notation}[Proposition]{Notation}
\DeclareMathOperator{\glct}{glct}
\DeclareMathOperator{\lct}{lct}
\DeclareMathOperator{\mult}{mult}
\DeclareMathOperator{\support}{supp}
\newtheorem*{Acknowledgments}{ACKNOWLEDGMENTS}
\title{Log canonical thresholds of Burniat surfaces with $K^2 = 5$
}
\author{Nguyen Bin, Jheng-Jie Chen and YongJoo Shin}
\date{}
\newcommand{\BinAddresses}{{% additional braces for segregating \footnotesize
		\bigskip
		\footnotesize
        \text{Nguyen Bin,}\par\nopagebreak	
		\text{Department of Mathematics and Statistics,}\par\nopagebreak	
		\text{Quy Nhon University,}\par\nopagebreak	
            \text{170 An Duong Vuong Street, Quy Nhon,}\par\nopagebreak
		\text{Vietnam.}\par\nopagebreak		
		\textit{E-mail address}: \texttt{nguyenbin@qnu.edu.vn}
		
}}
\newcommand{\JhengJieAddresses}{{% additional braces for segregating \footnotesize
		\bigskip
		\footnotesize
        \text{Jheng-Jie Chen,}\par\nopagebreak	
		\text{Department of Mathematics,}\par\nopagebreak	
		\text{National Central University,}\par\nopagebreak	
            \text{No. 300, Zhongda Rd., Zhongli District, }\par\nopagebreak
		\text{Taoyuan City, Taiwan.}\par\nopagebreak		
		\textit{E-mail address}: \texttt{jhengjie@math.ncu.edu.tw}
		
}}
\newcommand{\YongJooAddresses}{{% additional braces for segregating \footnotesize
		\bigskip
		\footnotesize
        \text{YongJoo Shin,}\par\nopagebreak	
		\text{Department of Mathematics,}\par\nopagebreak	
		\text{Chungnam National University,}\par\nopagebreak	
            \text{99, Daehak-ro, Yuseong-gu, }\par\nopagebreak
		\text{Daejeon, Republic of Korea.}\par\nopagebreak		
		\textit{E-mail address}: 
        \texttt{haushin@cnu.ac.kr}
		
}}
\begin{document}
\maketitle
\begin{abstract}
     In the paper we compute the global log canonical thresholds of the secondary Burniat surfaces with $K^2 = 5$. Furthermore, we establish optimal lower bounds for the log canonical thresholds of members in pluricanonical sublinear systems of the secondary Burniat surfaces with $K^2 = 5$.

\end{abstract}
\section{Introduction}

    This work was intended as an attempt to compute global log canonical thresholds of complex algebraic projective surfaces of general type. The study of surfaces of general type has a rich history dating back to the 19$^{\text{th}}$ century, and understanding their singularities is a classical topic in the theory of algebraic surfaces. In the last decades, the problem of computing global log canonical thresholds of complex algebraic projective surfaces has attracted the attention of many authors. Despite a significant progress in understanding the global log canonical thresholds of del Pezzo surfaces and higher dimensional Fano varieties, there is a scarcity of works addressing this topic for surfaces of general type.\\
	
	Let $X$ be a normal algebraic surface with at most log canonical singularities, and let $D$ be an effective $\mathbb{Q}$-Cartier divisor on $X$. The discrepancy of the log pair $(X,D)$ serves as a measure of its singularity. It is important to note that if $(X,D)$ is not log canonical, the discrepancy of $(X,D)$ is $-\infty$. In such case, the discrepancy provides no additional information beyond the non-log canonicity of $(X,D)$. Therefore the concept of the log canonical threshold is introduced to distinguish among non-log canonical singularities. The log canonical threshold of $X$ with respect to $D$, denoted as $\lct(X, D)$, is defined as follows:	
	$$ \lct\left( X, D\right) :=\sup\left\lbrace \lambda \in \mathbb{Q} \mid \text{the log pair} \left( X,\lambda D\right) \text{is log canonical}  \right\rbrace.  $$
	\noindent
	On the other hand, the global log canonical threshold of a polarized variety $X$ by a $\mathbb{Q}$-Cartier divisor $L$, denoted by $\glct(X, L)$, is defined as follows:	
	\begin{equation*}
	    \glct\left( X, L\right) :=\inf\left\lbrace \lct\left( X, D\right) \begin{array}{|l} 
         D \text{ is an effective } \mathbb{Q} \text{-Cartier divisor on } X\\
        \mathbb{Q}\text{-linearly equivalent to } L 
        \end{array}
       \!\!\! \right\rbrace\!.
	\end{equation*}

	\noindent
	To compute global log canonical thresholds is a challenging task, and despite the development of various methods, it remains difficult in general. However, these thresholds play a significant role in the study of birational geometry. Notably, the global log canonical threshold of del Pezzo surfaces polarized by the anti-canonical divisor serves as an algebraic counterpart to the $\alpha$-invariant of algebraic surfaces introduced in \cite{zbMATH05528013}. It is worth noting that the global log canonical threshold of surfaces of general type via their canonical divisors finds relevance in the study of three-folds of general type \cite{MR4105534}.\\

	A surface of general type  $ X $ satisfies the Castelnuovo's inequality (1905):
	\begin{align*}
		1 -q\left( X\right) + p_g\left( X\right) \ge 1,
	\end{align*}
	\noindent
	where $ p_g\left( X\right) $ is the geometric genus and $ q\left( X\right) $ is the irregularity. Currently, computing the global log canonical threshold of all surfaces of general type is far from reach. Hence, it is natural to focus on computing the global log canonical threshold of surfaces of general type that satisfy Castelnuovo's line. In particular, we will direct our attention to surfaces with $p_g = q = 0$. Among these surfaces, Burniat surfaces have been the subject of intense research. Burniat surfaces were constructed by Pol Burniat in 1966 \cite{zbMATH03232556}, and they have the self-intersection number $K^2 = 6, 5, 4, 3, 2$ of the canonical divisor $K$. The Burniat surfaces with $K^2 = 6$ are called the primary Burniat surfaces \cite{MR2609250}. The global log canonical threshold of the primary Burniat surface polarized by the canonical divisor was computed by Kim and  the third author \cite{MR4216579}, who proved that it is $\frac{1}{2}$. Our objective is to study the log canonical thresholds of the secondary Burniat surfaces with $K^2 = 5$ \cite{MR2609250}. A main result of this paper is as follows:	
	\begin{Theorem}\label{MainTheorem1}
		Let $ X $ be a secondary Burniat surface with $ K^2_X = 5 $. Then 
		\begin{align*}
			\glct\left( X,K_X\right) = \frac{1}{2}.
		\end{align*}
	\end{Theorem}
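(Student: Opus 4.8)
The plan is to prove the equality in two halves: the upper bound $\glct(X,K_X)\le \tfrac12$ and the lower bound $\glct(X,K_X)\ge \tfrac12$. For the upper bound, I would exhibit a single effective $\mathbb{Q}$-divisor $D\sim_{\mathbb{Q}} K_X$ with $\lct(X,D)=\tfrac12$. The natural candidate comes from the bicanonical or canonical geometry of the secondary Burniat surface: since $p_g=q=0$, one looks at $|2K_X|$ and uses the standard trick $D=\tfrac12 D'$ with $D'\in|2K_X|$ a member having a point of multiplicity $\ge 4$, or more intrinsically one uses the configuration of $(-2)$-curves and the images of the boundary lines under the $(\mathbb{Z}/2)^2$-cover structure. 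Concretely, a Burniat surface is a bidouble cover of (a blow-up of) $\mathbb{P}^2$ branched over a configuration of lines; the pullbacks of these lines are curves $C$ with $C\sim_{\mathbb{Q}}$ a rational multiple of $K_X$ and with nodes/tacnodes forcing $\lct = \tfrac12$. I would pick such a curve, compute its class, rescale to $K_X$, and read off the threshold from the worst singular point.

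For the lower bound I would argue by contradiction: suppose there is an effective $D\sim_{\mathbb{Q}}K_X$ with $\lct(X,D)<\tfrac12$, i.e. $(X,\lambda D)$ is not log canonical for some $\lambda<\tfrac12$. Then there is a point $p\in X$ at which $(X,\lambda D)$ fails to be log canonical, and by the standard multiplicity estimate this forces $\operatorname{mult}_p D > \tfrac1\lambda > 2$. The key numerical input is $K_X^2=5$, which bounds the local intersection contributions: $5=K_X\cdot D\ge \operatorname{mult}_p D\cdot(K_X\cdot\text{something})$, or more efficiently one applies the inequality $(D\cdot E)\ge \operatorname{mult}_p D$ for curves $E$ through $p$ together with the classification of curves of low $K_X$-degree on $X$. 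Because $K_X$ is ample with $K_X^2=5$ and the geometry of the bidouble cover pins down all curves $E$ with small $K_X\cdot E$ (the $(-2)$-curves, the pullbacks of lines, and their combinations), one can run a finite case analysis: for each possible location of $p$ (on a $(-2)$-curve, at an intersection of boundary components, or generic), derive a contradiction with $\operatorname{mult}_p D>2$ and $K_X\cdot D=5$.

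The main obstacle I expect is the case analysis for the lower bound at the ``bad'' points — the intersection points of the branch configuration and the $(-2)$-curves coming from the nodes of the line arrangement, where secondary Burniat surfaces differ from the primary ones. At such a point one cannot just use a single curve inequality; one typically has to pass to the minimal resolution, use the log pullback formula for $(X,\lambda D)$ along the exceptional $(-2)$-curves, and track how the coefficients interact via inversion of adjunction on those curves. The bookkeeping of discrepancies on chains of $(-2)$-curves, combined with keeping $\sum$ of local intersection numbers $\le K_X\cdot D=5$, is where the real work lies; everything else (the upper bound example, the generic-point estimate) is comparatively routine. A secondary subtlety is making sure the chosen $D$ for the upper bound is genuinely $\mathbb{Q}$-linearly equivalent to $K_X$ and not merely numerically so, which on a surface with $p_g=q=0$ and torsion in $\operatorname{Pic}$ requires a short check using the known torsion of Burniat surfaces.
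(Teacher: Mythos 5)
Your upper-bound plan is essentially the paper's: pull back a reducible member of $|-K_Y|$ under the bidouble cover and read off the threshold from the worst component. The paper takes $e = 2h_{13}+e_3+e_1+h_{24}\sim -K_Y$, so $E=\varphi^*(e)=4H_{13}+2E_3+2E_1+2H_{24}\sim 2K_X$ and $\lct(X,E)=\frac{1}{4}$ by the reducible-curve lemma; rescaling gives $\glct(X,K_X)\le\frac12$. Your remark about $\mathbb{Q}$-linear vs.\ numerical equivalence is a reasonable flag, but the paper sidesteps it by working throughout with $2K_X\sim\varphi^*(-K_Y)$, which is a genuine linear equivalence, so $\glct(X,K_X)=2\glct(X,2K_X)$.

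The real gap is in your treatment of the lower bound at a generic point. If $(X,\lambda D)$ is not log canonical at $P$ with $\lambda<\frac12$ and $\varphi(P)$ lies off the branch divisor $B$, then $\mult_P D>2$ together with $K_X\cdot D=D^2=K_X^2=5$ does \emph{not} by itself yield a contradiction: through a generic $P$ there is no curve of small enough $K_X$-degree to feed into the estimate $D\cdot C\ge\mult_P D\,\mult_P C$, and numerical/arithmetic-genus inequalities are satisfied with room to spare (e.g.\ $5-\mult_P^2D$ may well be negative for an effective class, and $p_a$ of an irreducible $D$ would be about $6$, compatible with a triple point). The paper's proof handles exactly this case by a descent that you do not invoke: using \cite[Lemma 4.1]{MR4216579}, non-log-canonicity of $(X,\frac14 D)$ at $P$ forces $(Y,\frac14\varphi(D)+\frac12 B)$ to be non-log-canonical at $\varphi(P)$, and if $\varphi(P)\notin B$ this contradicts Cheltsov's theorem that $\glct(Y,-K_Y)=\frac12$ for the degree-$5$ del Pezzo $Y$. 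Calling the generic-point estimate ``comparatively routine'' therefore misjudges where the content is; without the descent to $Y$ and the known global threshold of the del Pezzo, that case is stuck. A secondary misdirection: you anticipate bookkeeping along chains of $(-2)$-curves on a minimal resolution, but here $Y$ is the smooth del Pezzo of degree $5$ (four general points, no $(-2)$-curves) and $2K_X=\varphi^*(-K_Y)$ is ample, so $X$ is smooth with ample canonical class and there are no $(-2)$-curves on $X$ at all; the case analysis on the branch divisor proceeds by inversion of adjunction on the honest curves $E_i$, $H_{ij}$, $T_{ii}$ in $X$, not via exceptional chains.
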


	\noindent
	Our previous theorem demonstrates that the secondary Burniat surfaces with $K^2 = 5$ possess the same global log canonical threshold as the primary Burniat surfaces, despite the fact that the secondary Burniat surfaces  with $K^2 = 5$ are a degeneration of the primary Burniat surfaces.\\

    Since Burniat surfaces $X$ are $ \mathbb{Z}/2\mathbb{Z}\times \mathbb{Z}/2\mathbb{Z} $-covers of weak del Pezzo surfaces, the natural action of the group $ \mathbb{Z}/2\mathbb{Z}\times \mathbb{Z}/2\mathbb{Z} $ leads to the following decomposition of eigenspaces (cf. \cite[Section 2]{MR1718139}) for any positive integer $m$:
    $$H^{0}(X, mK_X) = H^{0}(X, mK_X)_0 \oplus  \bigoplus_{i=1}^3 H^{0}(X, mK_X)_i. $$
    Within the pluricanonical linear system $|mK_X|$ for any positive integer $m$, there exist an invariant part $|mK_X|_0$ and anti-invariant parts $|mK_X|_i$ for $i = 1,2,3$. The former consists of zeros of sections belonging to $H^{0}(X, mK_X)_0$, while the latter consists of zeros of sections of $H^{0}(X, mK_X)_i$ for $i = 1,2,3$.

    In their paper \cite{MR4515703}, Kim and the third author provided optimal lower bounds for the log canonical thresholds of members in pluricanonical sublinear systems for primary Burniat surfaces. Motivated by their exceptional research, we extend this line of the inquiry to secondary Burniat surfaces and present our own results. The optimal lower bounds we obtained for the log canonical thresholds of members in pluricanonical sublinear systems for secondary Burniat surfaces  with $ K^2_X = 5 $ are as follows:
	\begin{Theorem}\label{MainTheorem-2}
		Let $ X $ be a secondary Burniat surface with $ K^2_X = 5 $. Then for a positive integer $n\geq 2$ and each $i = 1,2,3$, if $D_{0} \in \left| 2nK_X\right|_0  $ and  $D_{i} \in \left| 2nK_X\right|_i  $,
		\begin{enumerate}
			\item $\lct\left( X,D_{0}\right) \ge \frac{1}{4n}$;
			\item $\lct\left( X,D_{i}\right) \ge \frac{1}{4n-3}$.
		\end{enumerate}
        Moreover, the inequalities are optimal.
	\end{Theorem}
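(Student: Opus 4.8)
The plan is to realize $X$ concretely as the $\mathbb{Z}/2\mathbb{Z}\times\mathbb{Z}/2\mathbb{Z}$-cover $\pi\colon X\to Y$ of the appropriate weak del Pezzo surface $Y$ of degree $5$, with branch data a configuration of lines and conics on $Y$, and to transfer everything to $Y$ via the standard formula $\lct(X,D)=\lct(Y;\,\pi_*D,\,\tfrac12 B)$ for the log pull-back, where $B$ is the (reduced) branch divisor. The first step is to pin down generators of the eigenspaces $H^0(X,2nK_X)_0$ and $H^0(X,2nK_X)_i$: each pluricanonical form in a given eigenspace pushes forward to a divisor on $Y$ that is forced, by the ramification formula, to contain prescribed multiples of the branch curves, and what is left is a "free" part whose class we can compute from $K_X=\pi^*(\text{something})+(\text{ramification})$. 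Carrying out this bookkeeping gives, for $D_0\in|2nK_X|_0$ and $D_i\in|2nK_X|_i$, an explicit description $\pi_*D=(\text{fixed combination of branch lines/conics})+(\text{moving member of a base-point-free or nearly base-point-free system})$ on $Y$.

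The second step is the lower bound itself. Having written $\lct(X,D_0)$ (resp. $\lct(X,D_i)$) as a log canonical threshold on $Y$ of an explicit boundary, I would bound it from below pointwise: at a point $p\in Y$ not on the branch locus the estimate reduces to $\lct$ of a plane-curve germ of controlled multiplicity, handled by the standard inequality $\lct\ge 1/\mathrm{mult}_p$ (or $\ge 2/\mathrm{mult}_p$ when two general branches are involved); at a point on the branch locus one adds the $\tfrac12 B$ contribution and uses the inversion-of-adjunction / log-canonical-threshold-of-a-pair-with-a-smooth-divisor computation. The numbers $\frac{1}{4n}$ and $\frac{1}{4n-3}$ should drop out as $1/(\text{maximal multiplicity of }\pi_*D\text{ along the worst curve})$, the difference of $3$ in the anti-invariant case reflecting the three branch lines in a fibre of one of the pencils on $Y$ that are automatically part of $\pi_*D_i$ but not of $\pi_*D_0$. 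One has to be careful that $n\ge 2$ is used precisely to guarantee the moving part is genuinely movable (for $n=1$ the system can be too small), which is why the statement excludes $n=1$.

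For optimality, the third step is to exhibit, for each $n$ and each type, a specific $D_0$ (resp. $D_i$) attaining the bound. The natural candidate is to take the moving part of $\pi_*D$ to degenerate as much as possible — e.g. concentrate it on a single line or conic through a branch point, or take it to be a high-multiple of one branch curve plus a transverse piece — so that the computed $\lct$ on $Y$ equals exactly $\frac{1}{4n}$ (resp. $\frac{1}{4n-3}$). Pulling this back to $X$ and checking that the pair $(X,\lambda D)$ fails to be log canonical for $\lambda$ slightly larger than the claimed value finishes optimality.

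The main obstacle I expect is the first step: correctly identifying the eigenspace decomposition of $H^0(2nK_X)$ in terms of divisor classes on the degree-$5$ weak del Pezzo surface $Y$, including keeping track of the $(-1)$- and $(-2)$-curves of $Y$ (the secondary surface being a degeneration, $Y$ acquires a $(-2)$-curve, and the branch configuration sits in special position), and determining exactly which branch components are forced into $\pi_*D$ with which multiplicities. Once that combinatorial/linear-systems input is correct, the $\lct$ estimates and the optimality examples are comparatively mechanical applications of the tools already available for log pairs on rational surfaces.
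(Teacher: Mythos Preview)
Your overall strategy matches the paper's—use the bidouble cover $\varphi\colon X\to Y$, read off the fixed part of $|2nK_X|_i$ from Lemma~\ref{pluricanonical_system_bidouble_covers}, argue by contradiction with inversion of adjunction, and exhibit explicit divisors for optimality—but the proposal contains factual errors and omits the actual work.

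Two corrections. The base $Y$ is the blow-up of $\mathbb{P}^2$ at four points in \emph{general} position (Notation~\ref{Notation of del Pezzo surface of degree 5}), a genuine del Pezzo surface of degree $5$ with no $(-2)$-curves; the degeneration from primary to secondary Burniat happens in the branch configuration, not in $Y$. And the equality $\lct(X,D)=\lct(Y;\pi_*D,\tfrac12 B)$ you invoke is not what is available: \cite[Lemma~4.1]{MR4216579} gives only that $(X,cD)$ not log canonical at $P$ implies $(Y,c\,\varphi(D)+\tfrac12 B)$ not log canonical at $\varphi(P)$. The paper uses this one-way implication solely to rule out $\varphi(P)\notin B$ via $\glct(Y,-K_Y)=\tfrac12$ (and once in Case~13); the main argument is carried out on $X$, not on $Y$.

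The substance you are missing is the case analysis. Part~(1) is immediate from Theorem~\ref{MainTheorem1}, so the content is part~(2). For $D_1\in|2nK_X|_1$ the paper writes $D_1=R_2+R_3+D_1'$, assumes $(X,\tfrac{1}{4n-3}D_1)$ is not log canonical at some $P$ with $\varphi(P)\in B$, and runs thirteen separate cases according to which branch component contains $P$. In each, one writes $D_1$ (or $D_1'$) as $\sum a_\bullet C_\bullet+\Omega$ with the $C_\bullet$ not in $\support(\Omega)$, bounds the relevant $a_\bullet$ by intersecting with curves such as $\varphi^*(t_j)$ or some $H_{ij}$, and applies inversion of adjunction along one component to force a contradictory pair of inequalities (e.g.\ $a_{12}-a_{34}>2n-2$ against $a_{12}-a_{34}\le 2n-2$ in Case~1). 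Your heuristic that ``$4n-3$ drops out as $1/(\text{maximal multiplicity along the worst curve})$'' and that the $3$ counts ``three branch lines in a fibre of a pencil'' is wrong: the fixed part $R_2+R_3$ has eight components, and $4n-3$ arises in the optimal example
\[
\overline{D}_1=R_2+R_3+(4n-4)H_{12}+(2n-2)H_{34}+2nE_1+(2n-4)E_2
\]
as the total $H_{12}$-coefficient ($H_{12}$ already occurs once in $R_3$) at a point of $H_{12}\cap H_{34}$, via Lemma~\ref{comarelct}. On the lower-bound side the constant does not come from a single multiplicity estimate; each of the thirteen cases closes with its own chain of inequalities, and this case-by-case bookkeeping is the proof.
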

	\noindent
     It is worth pointing out that $n=2$ is the minimal value so that each anti-invariant linear system $\left| 2nK_X\right|_i $ is non-empty (cf. Lemma \ref{pluricanonical_system_bidouble_covers}).
	\begin{Theorem}\label{MainTheorem-3}
		Let $ X $ be a secondary Burniat surface with $ K^2_X = 5 $. Then for a positive integer $n\geq 1$ and each $i = 1,2,3$, if $D_{0} \in \left| (2n+1)K_X\right|_0  $ and  $D_{i} \in \left| (2n+1)K_X\right|_i  $,
		\begin{enumerate}
			\item $\lct\left( X,D_{0}\right) \ge \frac{1}{4n-3}$;
			\item $\lct\left( X,D_{i}\right) \ge \frac{1}{4n}$.
		\end{enumerate}
        Furthermore, the inequalities are optimal.
	\end{Theorem}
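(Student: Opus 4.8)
\textbf{Proof strategy for Theorem~\ref{MainTheorem-3}.}
The argument runs in parallel with the proof of Theorem~\ref{MainTheorem-2}; the only structural change is that the pluricanonical multiplier is now the odd integer $2n+1$, and this change of parity is exactly what interchanges the two constants $\tfrac{1}{4n-3}$ and $\tfrac{1}{4n}$ between the invariant and the anti-invariant sublinear systems. We begin from the description of $X$ as a $(\mathbb{Z}/2\mathbb{Z})^{2}$-Galois cover $\pi\colon X\to\Sigma$ of a fixed weak del Pezzo surface $\Sigma$ of degree $5$, with branch curves $\Delta_{1},\Delta_{2},\Delta_{3}$ and ramification curves $R_{1},R_{2},R_{3}$ satisfying $\pi^{*}\Delta_{i}=2R_{i}$ and $K_{X}=\pi^{*}K_{\Sigma}+R_{1}+R_{2}+R_{3}$. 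Plugging $m=2n+1$ into Lemma~\ref{pluricanonical_system_bidouble_covers} and unravelling the eigensheaf decomposition of $\pi_{*}\mathcal{O}_{X}((2n+1)K_{X})$, one gets that every $D_{0}\in|(2n+1)K_{X}|_{0}$ has the shape $\pi^{*}B_{0}+R_{1}+R_{2}+R_{3}$ and every $D_{i}\in|(2n+1)K_{X}|_{i}$ has the shape $\pi^{*}B_{i}+R_{j}+R_{k}$ with $\{i,j,k\}=\{1,2,3\}$, where $B_{0}$ and $B_{i}$ move in explicit linear systems on $\Sigma$ (namely $|(2n+1)K_{\Sigma}+n\Delta|$ and $|(2n+1)K_{\Sigma}+n\Delta+L_{i}|$, where $\Delta=\Delta_{1}+\Delta_{2}+\Delta_{3}$ and $2L_{i}=\Delta_{j}+\Delta_{k}$) whose mobile and fixed parts are completely understood in terms of pullbacks of lines and conics through the blown-up points and the $(-2)$-curve of $\Sigma$. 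In particular, for $n=1$ the invariant system degenerates to the single divisor $D_{0}=R_{1}+R_{2}+R_{3}$.

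\textbf{Lower bounds.}
Since $\lct(X,D)=\min_{p\in X}\lct_{p}(X,D)$, it suffices to bound the local log canonical threshold of $D_{\bullet}$ at every point $p$ from below by the claimed value. If $p\notin R_{1}\cup R_{2}\cup R_{3}$, then $\pi$ is étale near $p$ and the local threshold of $D_{\bullet}$ equals that of the corresponding plane-curve-type divisor on $\Sigma$ at $\pi(p)$; using $\lct_{q}\ge 1/\mult_{q}$ at a smooth surface point and bounding the multiplicity of $B_{\bullet}$ by intersecting it with the pencils of $(-1)$-curves on $\Sigma$ (del~Pezzo geometry) yields the desired inequality. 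If $p$ lies on some $R_{i}$ we instead work with the local double-cover equation $z^{2}=f$ of $\pi$ transverse to $\Delta_{i}$, which turns the computation into a one-variable problem, and we keep careful track of the fixed ramification summand(s) appearing in $D_{\bullet}$ — this is precisely where the parity of $2n+1$ enters and produces the bound $\tfrac{1}{4n-3}$ for the invariant part and $\tfrac{1}{4n}$ for the anti-invariant parts, mirrored with respect to the even case in Theorem~\ref{MainTheorem-2}. (The convexity of the set of log canonical boundaries lets one separate the contribution of $\pi^{*}B_{\bullet}$ from that of the fixed ramification curves, but the sharp constants still require the explicit branch geometry of $\Sigma$.)

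\textbf{Optimality.}
For each inequality we produce a divisor attaining it. For $n=1$ the invariant bound $\tfrac{1}{4\cdot 1-3}=1$ is attained by the forced member $D_{0}=R_{1}+R_{2}+R_{3}$, which is nodal, hence $\lct=1$. For $n\ge 2$ and for the anti-invariant parts, we choose $B_{\bullet}$ to be a maximally degenerate member of its linear system so that $\pi^{*}B_{\bullet}$ together with the fixed ramification curves contains a smooth curve with coefficient exactly $4n-3$ (resp.\ $4n$) — e.g.\ by letting $B_{\bullet}$ absorb the maximal possible multiple of a suitable pullback-line or of the $(-2)$-curve along with the residual fixed part — so that along a general point of that curve the pair is locally $\{(\text{coefficient})\cdot(\text{coordinate})=0\}$; this forces $\lct(X,D_{\bullet})\le\tfrac{1}{4n-3}$ (resp.\ $\le\tfrac{1}{4n}$), and combined with the lower bound gives equality. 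One then checks directly, against the eigensheaf description above, that these explicit divisors indeed lie in the prescribed eigenspace.

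\textbf{Main obstacle.}
The real work, and the only genuinely delicate part, is the local analysis over the special points of the degenerate degree-$5$ configuration: the point(s) on the $(-2)$-curve of $\Sigma$, and the points where two branch curves meet or where a branch curve is tangent to an exceptional curve. One must enumerate these configurations, resolve the corresponding singularities of $\pi^{*}B_{\bullet}+\sum R_{\bullet}$, and verify that no worse discrepancy arises than in the primary ($K^{2}=6$) case, so that the multiplicity-based estimates above are not violated; in addition one must confirm that the degenerate members constructed for optimality sit in the correct eigenspace and are not dominated by some even more singular member of the same system. Everything else reduces to intersection-number bookkeeping on $\Sigma$.
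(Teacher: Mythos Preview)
Your sketch has the right large-scale architecture (reduce to $\Sigma$, separate on/off ramification, exhibit extremal members), but there are several concrete errors and one genuine gap that would prevent the argument from going through.

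\textbf{Two factual slips.} First, the decomposition you write for the anti-invariant pieces is the \emph{even} one: for odd $m=2n+1$, Lemma~\ref{pluricanonical_system_bidouble_covers}(4) gives $|(2n+1)K_X|_i=R_i+|\varphi^{*}((2n+1)K_Y+nB+L_i)|$, so the fixed ramification part of $D_i$ is $R_i$, not $R_j+R_k$. This matters, because your whole ``local double-cover'' heuristic for why parity swaps the two constants is keyed to which ramification curves occur with odd coefficient. Second, for $K_X^2=5$ the base $\Sigma$ is $\mathbb{P}^2$ blown up at four points in \emph{general} position, hence a genuine del~Pezzo surface of degree~$5$; there is no $(-2)$-curve, and the special points you keep referring to on it do not exist. (The nodal configurations first appear for $K^2\le 4$.)

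\textbf{The real gap.} The paper does \emph{not} get the lower bounds from the crude estimate $\lct_p\ge 1/\mult_p$ plus a local $z^2=f$ computation; that is simply too weak here. Instead one argues by contradiction: if $(X,\tfrac{1}{4n-3}D_0)$ or $(X,\tfrac{1}{4n}D_i)$ were not log canonical at $P$, one pushes down to $(Y,\tfrac{1}{c}d+\tfrac{1}{2}B)$ via \cite[Lemma~4.1]{MR4216579}, uses $\glct(Y,-K_Y)\ge\tfrac{1}{2}$ to force $\varphi(P)\in B$, and then runs an exhaustive case analysis over all possible positions of $P$ on the components of $R$. In each case one writes $D=\sum a_\alpha C_\alpha+\Omega$, bounds the $a_\alpha$ by intersecting with auxiliary curves $H_{ij}$, $E_k$, $\varphi^*(t_\ell)$, applies inversion of adjunction to a suitable component through $P$, and derives numerically contradictory inequalities. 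There are roughly four cases for the invariant part and fifteen for the anti-invariant part, and they are genuinely asymmetric because $L_1,L_2,L_3$ are not permuted by the geometry. Crucially, in two of the anti-invariant cases (namely $P\in H_{34}\cap T_{22}$ and $P\in T_{22}\cap T_{33}$) inversion of adjunction alone fails, and one needs the log-canonicity criterion of Jiang--Zou \cite[Lemma~3.2]{MR4597209} applied to a splitting $B'+C$ of the boundary to close the argument. Your proposal does not anticipate this obstruction, and the ``convexity'' remark you make cannot substitute for it.

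\textbf{Optimality.} Your idea here is essentially correct, and in fact the paper exhibits explicit members $\overline{D}_0$ and $\overline{D}_1$ built from combinations of $E_j$, $H_{ij}$ with a component of maximal coefficient $4n-3$ (resp.\ $4n$), then invokes Lemma~\ref{comarelct}. Your $n=1$ invariant observation that $D_0=R$ is nodal with $\lct=1$ is the right special case.
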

	\noindent
	
     Theorems \ref{MainTheorem-2} and \ref{MainTheorem-3} highlight a clear differentiation between the lower bounds of the log canonical thresholds in the anti-invariant parts of secondary Burniat surfaces with $ K^2_X = 5 $ and primary Burniat surfaces. In particular we obtain the following.
     \begin{corollary}
         For every positive integer $n$, there is no divisor $D$ in the  linear system $|(2n+1) K_X|_i$  satisfying $\glct(X,K_X)=\lct(X, \frac{1}{2n+1}D)$ for $i=0,1,2,3$ (resp. $|2nK_X|_i$ with satisfying $\glct(X,K_X)=\lct(X, \frac{1}{2n}D)$ for $i=1,2,3$).
     \end{corollary}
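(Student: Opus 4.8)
The plan is to derive this corollary as a formal consequence of Theorem~\ref{MainTheorem1} together with the optimal lower bounds recorded in Theorems~\ref{MainTheorem-2} and~\ref{MainTheorem-3}, using nothing more than the scaling behaviour of the log canonical threshold. First I would note that for an effective $\mathbb{Q}$-Cartier divisor $D$ with $D\sim_{\mathbb{Q}} mK_X$ one has $\lct\!\left(X,\tfrac{1}{m}D\right)=m\cdot\lct(X,D)$ straight from the definition (set $\mu=\lambda/m$ in the supremum defining $\lct$). Combined with Theorem~\ref{MainTheorem1}, which gives $\glct(X,K_X)=\tfrac12$, the sought equality $\glct(X,K_X)=\lct\!\left(X,\tfrac{1}{m}D\right)$ becomes equivalent to $\lct(X,D)=\tfrac{1}{2m}$. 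Thus the task reduces to checking that, in each of the listed systems, the guaranteed lower bound for $\lct(X,D)$ is \emph{strictly} larger than $\tfrac{1}{2m}$, or else that the system is empty.

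For the odd systems $|(2n+1)K_X|_i$ we have $2m=4n+2$. When $i=0$, Theorem~\ref{MainTheorem-3}(1) gives $\lct(X,D_0)\ge\tfrac{1}{4n-3}$, and since $1\le 4n-3<4n+2$ for every $n\ge1$ this forces $\lct(X,D_0)>\tfrac{1}{4n+2}$; when $i=1,2,3$, Theorem~\ref{MainTheorem-3}(2) gives $\lct(X,D_i)\ge\tfrac{1}{4n}>\tfrac{1}{4n+2}$. In either case the equality with $\glct(X,K_X)$ cannot hold. For the even anti-invariant systems $|2nK_X|_i$ with $i=1,2,3$ we have $2m=4n$: for $n\ge2$, Theorem~\ref{MainTheorem-2}(2) gives $\lct(X,D_i)\ge\tfrac{1}{4n-3}>\tfrac{1}{4n}$, and for $n=1$ the system $|2K_X|_i$ is empty by Lemma~\ref{pluricanonical_system_bidouble_covers}, so the assertion is vacuous. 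I would also remark on why the invariant even systems $|2nK_X|_0$ are deliberately excluded from the statement: there the bound $\lct(X,D_0)\ge\tfrac{1}{4n}$ of Theorem~\ref{MainTheorem-2}(1) is optimal, so a divisor attaining it does realize $\glct(X,K_X)=\lct\!\left(X,\tfrac{1}{2n}D_0\right)$, and the corollary would be false if that case were included.

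There is no genuine obstacle here; the only points demanding attention are bookkeeping — verifying that every strict inequality points in the right direction, and isolating the small values of $n$ (where a denominator such as $4n-3$ is still positive but close to the critical value, or where an anti-invariant pluricanonical system has not yet become non-empty) so that each is handled either by the inequality or by emptiness. Once the scaling identity $\lct\!\left(X,\tfrac1mD\right)=m\,\lct(X,D)$ is invoked, the corollary is immediate from the three main theorems.
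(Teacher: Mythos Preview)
Your proposal is correct and matches the paper's intent: the corollary is stated in the paper without an explicit proof, as an immediate consequence of Theorems~\ref{MainTheorem1}, \ref{MainTheorem-2}, and \ref{MainTheorem-3}, and your argument supplies precisely the routine verification (scaling of $\lct$, comparison of denominators, and the emptiness of $|2K_X|_i$ for $i=1,2,3$ noted after Theorem~\ref{MainTheorem-2}) that the paper leaves implicit.
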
 

    The strategy of our proof follows the approach adopted by Kim and the third author \cite{MR4216579, MR4515703}. The inequalities presented in the above theorems are established using a method of a contradiction. It's worth highlighting, however, that the branch locus of the secondary Burniat surface with $ K^2_X = 5 $ is not symmetric. As a result, we are faced with more cases to consider. In the most cases, the inversion of adjunction formula is applied to get contradictions. Furthermore, there are specific cases where we cannot directly derive a contradiction by applying the inversion of the adjunction formula. To address this challenge, we adopt the recent work from Jiang and Zou \cite{MR4597209}. These two approaches allow us to effectively achieve the desired contradictions.
    \medskip
    
    Throughout this paper all surfaces are projective algebraic over a field of the complex numbers. Linear equivalence of divisors is denoted by $ \sim $ and $\mathbb{Q}$-linear equivalence is denoted by $\sim_{\mathbb{Q}}$. We shall denote by $\Omega$ some effective $\mathbb{Q}$-Cartier divisor on $X$ in this paper.   

\section{Preliminary}
	The standard work on the secondary Burniat surfaces is \cite{MR2609250}. For the sake of the completeness, we present the construction of the secondary Burniat surface with $ K^2 = 5 $.
	
	\begin{Notation}\label{Notation of del Pezzo surface of degree 5}
	We denote by $ Y $ the blow-up of $ \mathbb{P}^2$ at four points $ P_1, P_2, P_3, P_4 $ in general position. Let us denote by $ l $ the pull-back of a general line in $ \mathbb{P}^2$, by $ e_1 $, $ e_2 $, $ e_3 $, $ e_4 $ the exceptional divisors corresponding to $ P_1 $, $ P_2 $, $ P_3 $, $ P_4 $ respectively, by $ t_1 $, $ t_2$, $ t_3$, $ t_4$ the strict transforms of a general line through $ P_1 $, $ P_2 $, $ P_3 $, $ P_4 $ respectively, and by $ h_{ij} $ the strict transforms of the line through the two points $ P_i $ and $ P_j $ for all $ i, j \in \left\lbrace 1,2,3,4\right\rbrace  $ respectively. The anti-canonical class 
	\begin{align*}
	-K_{Y} \sim 3l-e_1 - e_2 - e_3 - e_4.
	\end{align*}
	\noindent
	%{\color{violet} "is very ample and the linear system $ \left| -K_{Y} \right|  $ embeds $ Y $ as a smooth del Pezzo surface of degree $ 5 $ in $ \mathbb{P}^5 $."=== We can skip this information.}    	   		
	\end{Notation}

		We consider the following divisors of $ Y $:
	\begin{align*}
		B_{1}&:= e_1 + h_{23} + h_{24} + t_{22}  \sim 3l+e_1 - 3e_2 - e_3 - e_4; &L_{1}&: = 3l-2e_1-e_3 - e_4;\\
		B_{2}&:= e_2 + h_{13} + h_{34} + t_{33}  \sim 3l-e_1 +e_2 - 3e_3 - e_4; &L_{2}&: = 3l-e_1 -2e_2 - e_4;\\
		B_{3}&:= e_3 + h_{12} + h_{14} + t_{11}  \sim 3l-3e_1 -e_2 +e_3 - e_4; &L_{3}&: = 3l - e_2 -2e_3 - e_4;		
	\end{align*}
	\noindent
	where $ t_{11} \in \left| t_1\right| $, $ t_{22} \in \left| t_2\right| $ and $ t_{33} \in \left| t_3\right| $ are distinct divisors of $ Y $ such that no more than two of these divisors $ B_{1}, B_{2}, B_{3} $ go through the same point. The branch locus of the divisor $B:=B_1+B_2+B_3$ can be expressed as the following Dynkin diagram: 
   \begin{center}

\tikzset{every picture/.style={line width=0.75pt}} %set default line width to 0.75pt        

\begin{tikzpicture}[x=1.0pt,y=1.0pt,yscale=-1,xscale=1]
%uncomment if require: \path (0,266); %set diagram left start at 0, and has height of 266

%Straight Lines [id:da1969830113709794] 
\draw    (246,138) -- (322.2,45.3) ;
%Straight Lines [id:da39391957701023084] 
\draw    (246.2,139.3) -- (406.2,136.3) ;
%Straight Lines [id:da23956605523637475] 
\draw    (322.2,45.3) -- (405.2,135.3) ;
%Shape: Circle [id:dp5181809171833396] 
\draw  [fill={rgb, 255:red, 208; green, 2; blue, 27 }  ,fill opacity=1 ] (402.7,136.15) .. controls (402.7,134.41) and (404.11,133) .. (405.85,133) .. controls (407.59,133) and (409,134.41) .. (409,136.15) .. controls (409,137.89) and (407.59,139.3) .. (405.85,139.3) .. controls (404.11,139.3) and (402.7,137.89) .. (402.7,136.15) -- cycle ;
%Shape: Circle [id:dp029411046481397696] 
\draw  [fill={rgb, 255:red, 0; green, 0; blue, 0 }  ,fill opacity=1 ] (318.7,45.15) .. controls (318.7,43.41) and (320.11,42) .. (321.85,42) .. controls (323.59,42) and (325,43.41) .. (325,45.15) .. controls (325,46.89) and (323.59,48.3) .. (321.85,48.3) .. controls (320.11,48.3) and (318.7,46.89) .. (318.7,45.15) -- cycle ;
%Shape: Circle [id:dp27230291256798367] 
\draw  [fill={rgb, 255:red, 74; green, 144; blue, 226 }  ,fill opacity=1 ] (243.7,138.15) .. controls (243.7,136.41) and (245.11,135) .. (246.85,135) .. controls (248.59,135) and (250,136.41) .. (250,138.15) .. controls (250,139.89) and (248.59,141.3) .. (246.85,141.3) .. controls (245.11,141.3) and (243.7,139.89) .. (243.7,138.15) -- cycle ;
%Shape: Circle [id:dp2700333426652126] 
\draw  [fill={rgb, 255:red, 0; green, 0; blue, 0 }  ,fill opacity=1 ] (325.7,138.15) .. controls (325.7,136.41) and (327.11,135) .. (328.85,135) .. controls (330.59,135) and (332,136.41) .. (332,138.15) .. controls (332,139.89) and (330.59,141.3) .. (328.85,141.3) .. controls (327.11,141.3) and (325.7,139.89) .. (325.7,138.15) -- cycle ;
%Shape: Circle [id:dp011437107449525863] 
\draw  [fill={rgb, 255:red, 208; green, 2; blue, 27 }  ,fill opacity=1 ] (279.7,94.15) .. controls (279.7,92.41) and (281.11,91) .. (282.85,91) .. controls (284.59,91) and (286,92.41) .. (286,94.15) .. controls (286,95.89) and (284.59,97.3) .. (282.85,97.3) .. controls (281.11,97.3) and (279.7,95.89) .. (279.7,94.15) -- cycle ;
%Shape: Circle [id:dp6808173373722628] 
\draw  [fill={rgb, 255:red, 74; green, 144; blue, 226 }  ,fill opacity=1 ] (359.7,89.15) .. controls (359.7,87.41) and (361.11,86) .. (362.85,86) .. controls (364.59,86) and (366,87.41) .. (366,89.15) .. controls (366,90.89) and (364.59,92.3) .. (362.85,92.3) .. controls (361.11,92.3) and (359.7,90.89) .. (359.7,89.15) -- cycle ;
%Shape: Circle [id:dp8080923394664468] 
\draw  [fill={rgb, 255:red, 245; green, 166; blue, 35 }  ,fill opacity=1 ] (321.7,108.15) .. controls (321.7,106.41) and (323.11,105) .. (324.85,105) .. controls (326.59,105) and (328,106.41) .. (328,108.15) .. controls (328,109.89) and (326.59,111.3) .. (324.85,111.3) .. controls (323.11,111.3) and (321.7,109.89) .. (321.7,108.15) -- cycle ;
%Shape: Circle [id:dp10662045948290766] 
\draw  [fill={rgb, 255:red, 208; green, 2; blue, 27 }  ,fill opacity=1 ] (320.7,87.15) .. controls (320.7,85.41) and (322.11,84) .. (323.85,84) .. controls (325.59,84) and (327,85.41) .. (327,87.15) .. controls (327,88.89) and (325.59,90.3) .. (323.85,90.3) .. controls (322.11,90.3) and (320.7,88.89) .. (320.7,87.15) -- cycle ;
%Shape: Circle [id:dp7655976972856382] 
\draw  [fill={rgb, 255:red, 74; green, 144; blue, 226 }  ,fill opacity=1 ] (352.7,119.15) .. controls (352.7,117.41) and (354.11,116) .. (355.85,116) .. controls (357.59,116) and (359,117.41) .. (359,119.15) .. controls (359,120.89) and (357.59,122.3) .. (355.85,122.3) .. controls (354.11,122.3) and (352.7,120.89) .. (352.7,119.15) -- cycle ;
%Shape: Circle [id:dp23237072995540498] 
\draw  [fill={rgb, 255:red, 0; green, 0; blue, 0 }  ,fill opacity=1 ] (290.7,119.15) .. controls (290.7,117.41) and (292.11,116) .. (293.85,116) .. controls (295.59,116) and (297,117.41) .. (297,119.15) .. controls (297,120.89) and (295.59,122.3) .. (293.85,122.3) .. controls (292.11,122.3) and (290.7,120.89) .. (290.7,119.15) -- cycle ;
%Straight Lines [id:da9618756744350381] 
\draw [fill={rgb, 255:red, 0; green, 0; blue, 0 }  ,fill opacity=1 ]   (322,44) -- (325.2,109.3) ;
%Straight Lines [id:da9268738759408031] 
\draw    (324,108.6) -- (406.2,136.3) ;
%Straight Lines [id:da34382830550665466] 
\draw    (324,108.6) -- (248,138.6) ;
%Shape: Circle [id:dp6190767494562817] 
\draw  [fill={rgb, 255:red, 208; green, 2; blue, 27 }  ,fill opacity=1 ] (345.7,179.15) .. controls (345.7,177.41) and (347.11,176) .. (348.85,176) .. controls (350.59,176) and (352,177.41) .. (352,179.15) .. controls (352,180.89) and (350.59,182.3) .. (348.85,182.3) .. controls (347.11,182.3) and (345.7,180.89) .. (345.7,179.15) -- cycle ;
%Shape: Circle [id:dp7806014083388726] 
\draw  [fill={rgb, 255:red, 0; green, 0; blue, 0 }  ,fill opacity=1 ] (391.7,58.15) .. controls (391.7,56.41) and (393.11,55) .. (394.85,55) .. controls (396.59,55) and (398,56.41) .. (398,58.15) .. controls (398,59.89) and (396.59,61.3) .. (394.85,61.3) .. controls (393.11,61.3) and (391.7,59.89) .. (391.7,58.15) -- cycle ;
%Shape: Circle [id:dp5334696334076785] 
\draw  [fill={rgb, 255:red, 74; green, 144; blue, 226 }  ,fill opacity=1 ] (229.7,89.15) .. controls (229.7,87.41) and (231.11,86) .. (232.85,86) .. controls (234.59,86) and (236,87.41) .. (236,89.15) .. controls (236,90.89) and (234.59,92.3) .. (232.85,92.3) .. controls (231.11,92.3) and (229.7,90.89) .. (229.7,89.15) -- cycle ;
%Straight Lines [id:da2668620523111942] 
\draw    (356.2,118.3) -- (349.2,179.3) ;
%Straight Lines [id:da35112090567000265] 
\draw    (294.2,119.3) -- (350.2,180.3) ;
%Straight Lines [id:da9792181424087107] 
\draw    (247.2,138.3) -- (395.2,57.3) ;
%Straight Lines [id:da14223339367123877] 
\draw [color={rgb, 255:red, 0; green, 0; blue, 0 }  ,draw opacity=1 ]   (322.2,45.3) -- (349.2,179.3) ;
%Straight Lines [id:da6288615114786971] 
\draw    (395.2,58.3) -- (349.2,179.7) ;
%Straight Lines [id:da43408944566992314] 
\draw    (232,89) -- (395.2,58.3) ;
%Straight Lines [id:da23160092742139748] 
\draw    (329,138) -- (348.2,179.3) ;
%Straight Lines [id:da8972891487292909] 
\draw    (232,89) -- (405.2,135.3) ;
%Straight Lines [id:da9891625300239153] 
\draw    (232.2,89.3) -- (349.2,179.3) ;
%Straight Lines [id:da9340198471626218] 
\draw    (234,90) -- (283.2,94.3) ;
%Straight Lines [id:da16508061005208963] 
\draw    (232,89) -- (295.2,119.3) ;
%Straight Lines [id:da07189447587623721] 
\draw    (234,89) -- (324.2,86.3) ;
%Straight Lines [id:da27123305902567596] 
\draw    (394.8,57.5) -- (363,88.6) ;
%Straight Lines [id:da22305981727619395] 
\draw    (395,57) -- (323.2,87.3) ;
%Straight Lines [id:da5182528940900439] 
\draw    (395,58) -- (356,118.6) ;
%Curve Lines [id:da47045328916674967] 
\draw    (328.8,138.5) .. controls (315.8,121.5) and (311.8,99.5) .. (323.8,87.5) ;
%Curve Lines [id:da5613338787920308] 
\draw    (362.8,89.5) .. controls (351.8,112.5) and (328.8,123.5) .. (292.8,119.5) ;
%Curve Lines [id:da5785363893138851] 
\draw    (355.8,118.5) .. controls (343.8,95.5) and (294.8,85.5) .. (283.8,94.5) ;

% Text Node
\draw (310,32) node [anchor=north west][inner sep=0.75pt]   [align=left] {$e_1$};
% Text Node
\draw (287,89) node [anchor=north west][inner sep=0.75pt]  [color={rgb, 255:red, 208; green, 2; blue, 27 }  ,opacity=1 ] [align=left] {$h_{12}$};
% Text Node
\draw (232,136) node [anchor=north west][inner sep=0.75pt]  [color={rgb, 255:red, 74; green, 144; blue, 226 }  ,opacity=1 ] [align=left] {$e_2$};
% Text Node
\draw (321,122) node [anchor=north west][inner sep=0.75pt]   [align=left] {$h_{23}$};
% Text Node
\draw (408,126) node [anchor=north west][inner sep=0.75pt]  [color={rgb, 255:red, 208; green, 2; blue, 27 }  ,opacity=1 ] [align=left] {$e_3$};
% Text Node
\draw (348,87) node [anchor=north west][inner sep=0.75pt]  [color={rgb, 255:red, 74; green, 144; blue, 226 }  ,opacity=1 ] [align=left] {$h_{13}$};
% Text Node
\draw (325,99) node [anchor=north west][inner sep=0.75pt]  [color={rgb, 255:red, 245; green, 166; blue, 35 }  ,opacity=1 ] [align=left] {$e_4$};
% Text Node
\draw (290,120) node [anchor=north west][inner sep=0.75pt]   [align=left] {$h_{24}$};
% Text Node
\draw (348,103) node [anchor=north west][inner sep=0.75pt]  [color={rgb, 255:red, 74; green, 144; blue, 226 }  ,opacity=1 ] [align=left] {$h_{34}$};
% Text Node
\draw (349,182) node [anchor=north west][inner sep=0.75pt]  [color={rgb, 255:red, 208; green, 2; blue, 27 }  ,opacity=1 ] [align=left] {$t_{11}$};
% Text Node
\draw (399,48) node [anchor=north west][inner sep=0.75pt]   [align=left] {$t_{22}$};
% Text Node
\draw (225,73) node [anchor=north west][inner sep=0.75pt]  [color={rgb, 255:red, 74; green, 144; blue, 226 }  ,opacity=1 ] [align=left] {$t_{33}$};
% Text Node
\draw (307,76) node [anchor=north west][inner sep=0.75pt]  [color={rgb, 255:red, 208; green, 2; blue, 27 }  ,opacity=1 ] [align=left] {$h_{14}$};

\end{tikzpicture}

   \end{center}

	\noindent
	The divisors $ B_{1}, B_{2}, B_{3}, L_{1}, L_{2}, L_{3} $ define a $ \mathbb{Z}/2\mathbb{Z} \times \mathbb{Z}/2\mathbb{Z} $-cover $ \xymatrix{\varphi: X \ar[r] & Y}  $ branched along $B$ (cf. \cite{MR1718139, MR1103912}). The surface $ X $ satisfies the following:		
	\begin{align*}
		2K_{X} &\sim \varphi^*\left( 3l-e_1 - e_2 - e_3 - e_4\right) \sim \varphi^*\left( -K_{Y}\right).
	\end{align*}
	
	 Since the divisor $ 2K_{X} $ is the pull-back of a nef and big divisor, the canonical divisor $ K_{X} $ is nef and big. Thus, the surface $ X $ is of general type and minimal. Furthermore, the surface $X$ possesses the following invariants:
	\begin{align*}
		K_{X}^2=& \left( 3l-e_1 - e_2 - e_3 - e_4\right)^2=5,\\
		p_g\left( X\right) =&\ p_g\left(Y \right) +h^0\left( Y, L_{1} + K_{Y} \right) +h^0\left( Y, L_{2} + K_{Y} \right) +h^0\left( Y, L_{3} + K_{Y} \right)=0,\\
		\chi\left( \mathcal{O}_{X}\right) =&\ 4\chi\left( \mathcal{O}_{Y} \right)  +\frac{1}{2}L_{1}\left( L_{1}+K_{Y}\right) +\frac{1}{2}L_{2}\left( L_{2}+K_{Y}\right) +\frac{1}{2}L_{3}\left( L_{3}+K_{Y}\right)=1,\\
		q\left( X\right)  =&\ 1+p_g\left( X\right)-\chi\left( \mathcal{O}_{X}\right)=0.
	\end{align*}

   From now on, we denote by $ T_{ii}:=\varphi^{*}\left( t_{ii}\right)_{\text{red}}  $, $ E_i:=\varphi^{*}\left( e_i\right)_{\text{red}}  $ for all $ i = 1,2,3 $, $\Tilde{E}_4:=\varphi^{*}(e_4)$, and $ H_{ij}:=\varphi^{*}\left( h_{ij}\right)_{\text{red}}  $, for all $ i \ne j \in \{1,2,3,4\} $. Here are some elementary properties of secondary Burniat surfaces:
 
    \begin{Lemma}
        Let $X$ be a secondary Burniat surface with $ K^2_X = 5 $. Then
        \begin{enumerate}
            \item $K_X\cdot E_i = 1$ for all $i = 1,2,3$, and $K_X\cdot\Tilde{E}_4 = 2$;
            \item $K_X\cdot\varphi^{*}(l) = 6$, and $K_X\cdot\varphi^{*}(t_i) = 4$ for all $i = 1,2,3,4$;
            \item $K_X\cdot H_{ij} = 1$  for all $i,j = 1,2,3,4$ and $i\neq j$;
            \item $K_X\cdot T_{ii} = 2$ for all $i = 1,2,3$.
        \end{enumerate}
    \end{Lemma}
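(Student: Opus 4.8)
The plan is to compute all the stated intersection numbers by pulling everything back to the del Pezzo surface $Y$ of degree $5$ via the bidouble cover $\varphi\colon X \to Y$, where we already know $2K_X \sim \varphi^*(-K_Y)$ from the excerpt. The essential tool is the projection formula: for a curve $C$ on $X$ of the form $C = \varphi^*(c)_{\mathrm{red}}$, if $\varphi$ restricted to $C$ has degree $d$ onto its image $c$, then $2K_X \cdot C = \varphi^*(-K_Y)\cdot C = d\,((-K_Y)\cdot c)$. So I would first record the relevant multiplicities of $\varphi$ along each component of the branch locus $B$ and along the curves $l$, $t_i$, $e_i$, $h_{ij}$, and then read off each intersection number on $Y$ using $-K_Y \sim 3l - e_1 - e_2 - e_3 - e_4$ together with the standard intersection form on the blow-up of $\mathbb{P}^2$ at four general points ($l^2 = 1$, $e_i^2 = -1$, $l\cdot e_i = 0$, $e_i\cdot e_j = 0$).

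First I would handle part (2), which is the cleanest. The curve $\varphi^*(l)$ is the pullback of a general line, which is disjoint from the branch locus, so $\varphi$ is étale of degree $4$ over it and $\varphi^*(l)$ is a genuine pullback (no ``red'' needed); hence $2K_X \cdot \varphi^*(l) = 4\,((-K_Y)\cdot l) = 4\cdot 3 = 12$, giving $K_X\cdot\varphi^*(l) = 6$. Similarly $t_i$ is a general line through one point $P_i$, meeting the branch locus only in the single divisor $B_j$ containing $e_i$ (for the appropriate $j$), so $\varphi^*(t_i)$ is a connected double cover and $2K_X\cdot\varphi^*(t_i) = 4\,((-K_Y)\cdot t_i)$; since $t_i \sim l - e_i$ on $Y$, $(-K_Y)\cdot t_i = 3 - 1 = 2$, yielding $K_X\cdot\varphi^*(t_i) = 4$. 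For (1), (3), (4) the curves $E_i$, $\tilde E_4$, $H_{ij}$, $T_{ii}$ are reduced preimages of components of the branch locus, so $\varphi$ maps each of them with degree $2$ onto its image except for $\tilde E_4 = \varphi^*(e_4)$, which is the full (reduced, but with the cover ramified) preimage of $e_4$ — I would check that $e_4$ is not a component of $B$, so $\varphi^{-1}(e_4)$ is irreducible and $\varphi|_{\tilde E_4}$ has degree $2$ but $\tilde E_4 = \varphi^*(e_4)$ as a divisor (multiplicity one). Then $2K_X\cdot E_i = 2\,((-K_Y)\cdot e_i) \cdot (\text{correction for ramification})$; more carefully, since $E_i$ lies in the branch locus, $\varphi^*(e_i) = 2 E_i$, so $2K_X\cdot 2E_i = 4 K_X\cdot E_i = \varphi^*(-K_Y)\cdot\varphi^*(e_i) = 2\,((-K_Y)\cdot e_i) = 2\cdot 1 = 2$ — wait, I need $\varphi^*(-K_Y)\cdot\varphi^*(e_i) = (\deg\varphi)((-K_Y)\cdot e_i) = 4$, hence $4K_X\cdot E_i = 4$ and $K_X\cdot E_i = 1$. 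The same bookkeeping gives $K_X\cdot\tilde E_4$: here $\tilde E_4 = \varphi^*(e_4)$ with $(-K_Y)\cdot e_4 = 1$, so $2K_X\cdot\tilde E_4 = 4\cdot 1 = 4$ and $K_X\cdot\tilde E_4 = 2$. For $H_{ij}$ a component of the branch locus, $\varphi^*(h_{ij}) = 2H_{ij}$, and with $(-K_Y)\cdot h_{ij} = (-K_Y)\cdot(l - e_i - e_j) = 3 - 1 - 1 = 1$ we get $4K_X\cdot H_{ij} = 4$, so $K_X\cdot H_{ij} = 1$; I should double-check the cases $h_{i4}$ which are not in $B$ — there $H_{i4} = \varphi^*(h_{i4})_{\mathrm{red}}$ with $\varphi$ of degree $2$ on it, giving $2K_X\cdot H_{i4} = 2\cdot 1 = 2$, again $K_X\cdot H_{i4} = 1$, consistent. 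Finally for $T_{ii}$ a component of $B$: $T_{ii} = \frac12\varphi^*(t_{ii})_{\mathrm{red}}$-type, with $t_{ii}\in|t_i|$ so $(-K_Y)\cdot t_{ii} = 2$, and $\varphi^*(t_{ii}) = 2T_{ii} + (\text{other reduced components})$; I would instead argue directly that $T_{ii}$ is one of the two sheets... actually since $t_{ii}$ is a branch component, $\varphi^{-1}(t_{ii})$ is irreducible with $\varphi^*(t_{ii}) = 2T_{ii}$, so $4K_X\cdot T_{ii} = \varphi^*(-K_Y)\cdot\varphi^*(t_{ii}) = 4\cdot 2 = 8$, giving $K_X\cdot T_{ii} = 2$.

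The main obstacle, and the point requiring genuine care rather than rote computation, is keeping straight for each curve whether it is (a) disjoint from the branch locus (so $\varphi^*$ is a true degree-$4$ pullback), (b) a component of the branch locus $B$ (so $\varphi^* = 2\cdot(\text{reduced})$ and the reduced preimage is irreducible), or (c) meeting $B$ but not contained in it — and in case (c), how many connected components the preimage has and with what ramification. In particular I need to verify from the explicit description of $B = B_1 + B_2 + B_3$ that $e_4$, $h_{14}$, $h_{24}$, $h_{34}$ and the generic members of $|t_i|$ that are not the chosen $t_{ii}$ behave as claimed, using that the $B_i$ are given as sums $B_1 = e_1 + h_{23} + h_{24} + t_{22}$, etc. Once the stratification of each curve relative to $B$ is pinned down, every intersection number follows from a one-line application of the projection formula on $Y$, so I would present the argument as: (i) state the projection-formula identity $4\,(K_X\cdot\varphi^*(c)_{\mathrm{red}}) = 2\,(\deg\varphi|_{\text{preimage}})\,((-K_Y)\cdot c)$ in each of the three cases, (ii) list for each curve which case applies, (iii) tabulate the resulting numbers, matching the four items of the Lemma.
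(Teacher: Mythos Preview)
The paper states this Lemma without proof, labeling these as ``elementary properties,'' so there is no argument to compare against; your approach via the projection formula together with $2K_X \sim \varphi^*(-K_Y)$ is the natural one and yields all four items correctly.

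One factual slip to fix: you write that ``the cases $h_{i4}$ \dots\ are not in $B$,'' but in fact $h_{14}$ is a component of $B_3$, $h_{24}$ of $B_1$, and $h_{34}$ of $B_2$ (see the explicit definitions $B_1 = e_1 + h_{23} + h_{24} + t_{22}$, $B_2 = e_2 + h_{13} + h_{34} + t_{33}$, $B_3 = e_3 + h_{12} + h_{14} + t_{11}$). Thus \emph{all six} $h_{ij}$ lie in the branch locus, $\varphi^*(h_{ij}) = 2H_{ij}$ uniformly, and your main computation $4\,K_X\cdot H_{ij} = \varphi^*(-K_Y)\cdot\varphi^*(h_{ij}) = 4\,(-K_Y)\cdot h_{ij} = 4$ already covers every case. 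Your alternate computation for the hypothetical ``not in $B$'' case is unnecessary, and its assertion that $\varphi|_{H_{i4}}$ would then have degree $2$ onto $h_{i4}$ would require an extra argument about how the bidouble cover splits over such a curve --- so it is cleaner simply to drop that paragraph.
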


     For a deeper discussion of $ \mathbb{Z}/2\mathbb{Z} \times \mathbb{Z}/2\mathbb{Z} $-covers, we refer the reader to \cite{MR1718139, MR1103912}. Here we recall a fact that we are going to utilize in the present paper.
    \begin{Lemma}(cf. \cite{MR1718139, MR1103912} and \cite[Proposition 1.6]{MR444676})\label{pluricanonical_system_bidouble_covers}
         Let $ \xymatrix{\varphi: X \ar[r] & Y}  $be a smooth $ \mathbb{Z}/2\mathbb{Z} \times \mathbb{Z}/2\mathbb{Z} $-cover with the building data $\{B_1,B_2,B_3,L_1,L_2,L_3\}$ where $X$ is a secondary Burniat surface with $K_X^2=5$ and $Y$ is a del Pezzo surface in Notation 1. For a positive integer $n$ and each $ i = 1,2,3$ with $\{i,j,k\} = \{1,2,3\}$, one has 
        \begin{enumerate}
            \item $\left| 2nK_X\right|_0 = \varphi^{*}\left|n(2K_Y+B)\right|$;
            \item $\left| 2nK_X\right|_i = R_j + R_k + \left|\varphi^{*}(n(2K_Y+B)-L_i)\right|$;
            \item $\left| (2n+1)K_X\right|_0 = R + \varphi^{*}\left|(2n+1)K_Y+nB\right|$;
            \item $\left| (2n+1)K_X\right|_i = R_i + \left|\varphi^{*}((2n+1)K_Y+nB+L_i)\right|$.
        \end{enumerate}
    \end{Lemma}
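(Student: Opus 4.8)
The statement is the standard description of the pluricanonical linear systems of a smooth $\mathbb{Z}/2\mathbb{Z}\times\mathbb{Z}/2\mathbb{Z}$-cover in terms of the building data, specialized to our situation; so the plan is to recall the general decomposition of the pushforward and then match sheaves degree by degree. First I would recall that for a smooth bidouble cover $\varphi\colon X\to Y$ with building data $\{B_1,B_2,B_3,L_1,L_2,L_3\}$ (so that $2L_i\sim B_j+B_k$ for $\{i,j,k\}=\{1,2,3\}$ and $L_i+L_j\sim L_k+B_k$), one has the eigensheaf decomposition
\begin{align*}
\varphi_*\mathcal{O}_X \;=\; \mathcal{O}_Y \;\oplus\; \mathcal{O}_Y(-L_1)\;\oplus\;\mathcal{O}_Y(-L_2)\;\oplus\;\mathcal{O}_Y(-L_3),
\end{align*}
and, by the projection formula together with $K_X=\varphi^*(K_Y+L_1+L_2+L_3)$ (equivalently $2K_X\sim\varphi^*(-K_Y)$ in the Burniat case), a corresponding decomposition of $\varphi_*\mathcal{O}_X(mK_X)$ into four line bundles on $Y$. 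Taking global sections gives the four eigenspaces $H^0(X,mK_X)_i$, and it remains to identify each summand with the asserted twist.

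The core computation is bookkeeping with the relations among the $L_i$ and $B_i$. For the invariant part one computes the $\mathcal{O}_Y$-summand of $\varphi_*\mathcal{O}_X(mK_X)$: using $2K_X\sim\varphi^*(2K_Y+B)$ one gets, for $m=2n$, the summand $\mathcal{O}_Y(n(2K_Y+B))$, which yields (1) after noting that $\varphi^*$ identifies $|n(2K_Y+B)|$ on $Y$ with its pullback inside $|2nK_X|_0$; for $m=2n+1$ one gets $R+\varphi^*|(2n+1)K_Y+nB|$, the fixed part $R=R_1+R_2+R_3$ being the ramification divisor, giving (3). For the anti-invariant parts one reads off the $\mathcal{O}_Y(-L_i)$-eigensheaf: twisting by $mK_X$ and using $2L_i\sim B_j+B_k$ produces the class $\varphi^*(n(2K_Y+B)-L_i)$ in the even case and $\varphi^*((2n+1)K_Y+nB+L_i)$ in the odd case, with the respective fixed ramification components $R_j+R_k$ (even) and $R_i$ (odd) coming from the fact that sections in the $i$-th eigenspace vanish on exactly those parts of the ramification locus lying over $B_j,B_k$ (resp. $B_i$). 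These fixed-component identifications are exactly \cite[Proposition 1.6]{MR444676} and \cite{MR1718139, MR1103912}, so I would cite those rather than reprove them.

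The only genuinely non-formal point is checking that no unexpected base locus or extra fixed part appears beyond the ramification components already extracted — i.e. that $\varphi^*|D|$ really is the full (moving part of the) eigen-subsystem for the relevant divisor $D$ on $Y$. This follows because $\varphi^*$ is injective on linear systems and $\varphi_*$ is exact for a finite flat morphism, so $h^0(X,mK_X)_i=h^0(Y, D_i)$ with $D_i$ the corresponding twist, forcing equality of the two systems once the fixed ramification part is accounted for; here one uses that $Y$ is a (weak) del Pezzo so the relevant cohomology behaves well. I would therefore present the proof as: (i) write the eigensheaf decomposition of $\varphi_*\mathcal{O}_X(mK_X)$; (ii) compute each of the four line-bundle summands via the building-data relations, separating $m$ even and odd; (iii) match fixed parts with ramification components citing \cite{MR444676}; (iv) conclude. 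I expect step (ii) — the careful divisor-class arithmetic distinguishing the $i$-th summand from the other two and pinning down $R$ versus $R_j+R_k$ — to be the main place where care is needed, though it is entirely routine given the relations listed above.
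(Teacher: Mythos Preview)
The paper does not supply its own proof of this lemma: it is stated with the parenthetical citation to Catanese, Pardini, and Peters and then used as a black box throughout. Your plan---eigensheaf decomposition of $\varphi_*\mathcal{O}_X(mK_X)$, identification of each line-bundle summand via the building-data relations $2L_i\sim B_j+B_k$ and $L_i+L_j\sim L_k+B_k$, and extraction of the fixed ramification components $R$, $R_i$, $R_j+R_k$---is exactly the standard argument contained in those references, so there is nothing substantive to compare against and your outline is adequate.

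One small correction worth flagging: the formula $K_X=\varphi^*(K_Y+L_1+L_2+L_3)$ is not literally correct here. Since $L_1+L_2+L_3\sim B\sim -3K_Y$ is not $2$-divisible in $\mathrm{Pic}(Y)$, the class $K_X$ is not a pullback at all; what holds is $K_X=\varphi^*K_Y+R$ with $2R=\varphi^*B$, hence $2K_X\sim\varphi^*(2K_Y+B)$. Your computation of the summands should (and in effect does) run through $mK_X=\varphi^*(mK_Y)+mR$ and the known eigensheaf decomposition of $\varphi_*\mathcal{O}_X(mR)$, so this slip does not affect the argument.
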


    %{\color{violet}
    %\begin{Lemma}(cf. \cite{MR1718139, MR1103912} %and \cite[Proposition 1.6]%{MR444676})\label{pluricanonical_system_bidouble_%covers}
    %     Let $N$ be a smooth surface and let $ \xymatrix{\varphi: M \ar[r] & N}  $be a smooth $ \mathbb{Z}/2\mathbb{Z} \times \mathbb{Z}/2\mathbb{Z} $-cover with the building data $\{B_1,B_2,B_3,L_1,L_2,L_3\}$. For a positive integer $n$ and each $ i = 1,2,3$ with $\{i,j,k\} = \{1,2,3\}$,
    %    \begin{enumerate}
    %        \item $\left| 2nK_M\right|_0 = \varphi^{*}\left|n(2K_N+B)\right|$;
    %        \item $\left| 2nK_M\right|_i = R_j + R_k + \left|\varphi^{*}(n(2K_N+B)-L_i)\right|$;
    %        \item $\left| (2n+1)K_M\right|_0 = R + \varphi^{*}\left|(2n+1)K_N+nB\right|$;
     %       \item $\left| (2n+1)K_M\right|_i = R_i + \left|\varphi^{*}((2n+1)K_N+nB+L_i)\right|$.
     %   \end{enumerate}
    %\end{Lemma}
    %}

 \begin{Lemma}\label{comarelct}
   		Suppose $L_1$ and $L_2$ are two distinct lines in $\mathbb{A}^2$. Let $n_1$ and $n_2$ be two integers with $n_2\geq n_1>0$. Then $\lct(\mathbb{A}^2, n_1L_1+n_2L_2)=\frac{1}{n_2}$.
   \end{Lemma}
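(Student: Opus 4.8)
The plan is to reduce to a local computation at the intersection point of the two lines and then invoke the standard description of log canonical thresholds of simple normal crossing divisors. First I would use that $\lct(\mathbb{A}^2,n_1L_1+n_2L_2)=\inf_p\lct_p(\mathbb{A}^2,n_1L_1+n_2L_2)$, the infimum being over closed points $p$. Off $L_1\cup L_2$ the pair is smooth, and at a point lying on exactly one of the lines, say on $L_1$ only, the divisor agrees locally with the smooth divisor $n_1L_1$, so the local threshold there is $1/n_1\ge 1/n_2$ since $n_2\ge n_1>0$. Consequently, if $L_1$ and $L_2$ are parallel the threshold is already seen to be $1/n_2$, and in general only a point of $L_1\cap L_2$ can be responsible for the infimum.

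Next I would treat the case $L_1\cap L_2=\{p\}$. After an affine change of coordinates I may assume $p=(0,0)$, $L_1=\{x=0\}$ and $L_2=\{y=0\}$, so that $D:=n_1L_1+n_2L_2=\{x^{n_1}y^{n_2}=0\}$ is simple normal crossing and a log resolution is obtained by the single blow-up $\pi\colon\widetilde X\to\mathbb{A}^2$ at $p$, with exceptional curve $E$. Since each $L_i$ passes through $p$ with multiplicity one, one computes $a(E;\mathbb{A}^2,\lambda D)=1-\lambda(n_1+n_2)$, while the components $L_1,L_2$ themselves force $\lambda n_1\le 1$ and $\lambda n_2\le 1$. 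Thus $(\mathbb{A}^2,\lambda D)$ is log canonical near $p$ exactly when $\lambda n_1\le 1$, $\lambda n_2\le 1$ and $\lambda(n_1+n_2)\le 2$; since $0<n_1\le n_2$, the first and third inequalities follow from the second, so the largest admissible $\lambda$ is $1/n_2$. Together with the first paragraph this yields $\lct(\mathbb{A}^2,n_1L_1+n_2L_2)=1/n_2$.

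I do not expect a genuine obstacle here: this is a routine simple normal crossing computation, and one could alternatively just cite the formula $\lct(\mathbb{A}^2,\sum_i a_iD_i)=\min_i 1/a_i$ for an SNC divisor with positive coefficients. The only points I would be careful about are handling the parallel-lines case separately (there is no intersection point to blow up) and using the hypothesis $n_2\ge n_1$ in the correct direction, so that $\min(1/n_1,1/n_2)$ is indeed $1/n_2$ and the exceptional-divisor bound $\lambda(n_1+n_2)\le 2$ is never the binding one.
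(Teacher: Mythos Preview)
Your argument is correct. The paper does not give its own proof of this lemma; it simply records that the statement follows from \cite[Example 8.17]{MR1492525} or \cite[Proposition 2.2]{MR1704476}. What you have written is precisely the standard simple normal crossing computation underlying those references, so there is no substantive difference in approach---you have just unpacked the citation. One cosmetic remark: your ``say on $L_1$ only'' sentence explicitly treats only the $L_1$-branch, but the symmetric $L_2$-only case (local threshold $1/n_2$) is what actually realizes the infimum when the lines are parallel; this is implicit in your conclusion but worth stating.
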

     Lemma \ref{comarelct} follows from \cite[Example 8.17]{MR1492525} or \cite[Proposition 2.2]{MR1704476}.

    %Lemma \ref{comarelct} follows from \cite[Example 8.17]{MR1492525}. Note that Lemma \ref{comarelct} is also a particular case of one partial result of Kuwata (by taking $k=0$) as follows:\begin{Proposition}[Proposition 2.2 in \cite{MR1704476}] Let $f=x^{n_1}y^{n_2}(x^{km_1}+y^{km_2})\in \mathbb{C}\{x,y\}$ be a holomorphic function where $k,m_1,m_2, n_1$ and $n_2$ are non-negative integers. If $D=(f=0)$, then $$\lct(\mathbb{A}^2_\mathbb{C},D)=\min\{\frac{\frac{1}{m_1}+\frac{1}{m_2}}{k+\frac{n_1}{m_1}+\frac{n_2}{m_2}}, \frac{1}{n_1}, \frac{1}{n_2}\}.$$\end{Proposition}

    %{\color{violet} Maybe $n_1, n_2, m_1, m_2$ are positive integer?}

\section{Global log canonical threshold of the secondary Burniat surface with $ K^2 = 5 $}
   The aim of this section is to prove Theorem \ref{MainTheorem1}. Since $\glct(X,K_X)=2\glct(X, 2K_X)$, it suffices to prove that $\glct(X, 2K_X) = \frac{1}{4}$. We divide the proof of this result into the following two propositions.
   	\begin{Proposition}\label{upperboundofglct}  $ \glct \left( X,2K_X\right) \le \frac{1}{4} $
   	\end{Proposition}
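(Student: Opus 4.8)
The plan is to exhibit a single effective $\mathbb{Q}$-divisor $D \sim_{\mathbb{Q}} 2K_X$ for which $\lct(X, D) \le \tfrac14$, which by definition of the infimum forces $\glct(X, 2K_X) \le \tfrac14$. The natural candidate comes from the ramification divisor of the bidouble cover $\varphi\colon X \to Y$. Recall $2K_X \sim \varphi^*(-K_Y)$, and each component $B_i$ of the branch locus pulls back (with reduction) to a curve that appears with multiplicity $2$ in $\varphi^*(B_i)$; equivalently, writing $R_i$ for the reduced ramification curve over $B_i$, we have $\varphi^*(B_i) = 2R_i$ and $2R_i \sim \varphi^*(B_i)$. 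Since $B_1 + B_2 + B_3 = B \sim 2(-K_Y) + \text{(something)}$—more precisely one checks from the classes listed that $B \sim -2K_Y + (\text{effective or explicitly computable class})$—the cleanest move is to pick one $B_i$, say $B_1 = e_1 + h_{23} + h_{24} + t_{22}$, and work with a component of its preimage together with the preimages of the exceptional curves $E_i$ over which $\varphi$ is ramified.

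First I would identify a point $p \in X$ lying over a point $q \in Y$ where the branch curve $B$ has several analytic branches meeting, and where correspondingly the preimage $D$ of a suitable divisor acquires high multiplicity. From the Dynkin diagram in the Preliminary section, there are points where three of the curves among $\{e_i, h_{ij}, t_{ii}\}$ are concurrent; over such a point the reduced preimage picks up local multiplicity contributions that, after summing up an effective $\mathbb{Q}$-divisor $\mathbb{Q}$-linearly equivalent to $2K_X$, give a nonklt pair already at coefficient $\tfrac14$. Concretely, I expect to take $D = \tfrac12 \varphi^*(C)$ for a carefully chosen $C \in |-K_Y|$ (or a small positive combination of the $B_i$'s rescaled to class $-K_Y$) passing through a triple point of $B$ with a tangency or a triple intersection; then $\operatorname{mult}_p D \ge$ something like $2$ coming from the branching, and Lemma \ref{comarelct} (or the elementary fact that $\lct$ of a divisor of multiplicity $m$ at a smooth point is $\le 2/m$) yields $\lct(X, D) \le \tfrac14$. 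Alternatively, and perhaps more robustly, one uses $R_1 + R_2 + R_3 \sim \varphi^*(-K_Y)$-type relations: since each $R_i$ is a reduced curve and they pass through common points over the triple points of the diagram, the divisor $R_1 + R_2 + R_3$ (or an appropriate rescaling landing in class $2K_X$) has a point of multiplicity $\ge 3$, giving $\lct \le 2/3 \cdot (\text{scaling}) \le \tfrac14$.

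The key technical step is the bookkeeping: verify that the chosen combination of preimage curves (or $\varphi^*$ of a combination of $B_i$'s and $e_i$'s) is genuinely $\mathbb{Q}$-linearly equivalent to $2K_X = \varphi^*(-K_Y)$, using the class computations $B_i \sim \dots$ and $-K_Y \sim 3l - e_1 - e_2 - e_3 - e_4$ together with the covering relations $\varphi^*(B_i) = 2R_i$ and $\varphi^*(e_i) = 2E_i$ for $i=1,2,3$ while $\varphi^*(e_4) = \tilde E_4$. Then compute the local multiplicity of the resulting effective divisor $D$ at the distinguished point $p \in X$, and conclude $\lct(X, \tfrac14 \cdot 4D) \le$ the threshold by the standard multiplicity bound. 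The main obstacle I anticipate is the asymmetry of the branch locus emphasized in the introduction: unlike the primary Burniat case, here $\tilde E_4 = \varphi^*(e_4)$ is not part of the ramification in the same way (it has $K_X \cdot \tilde E_4 = 2$), so one must be careful which triple point of the Dynkin diagram to use and whether the naive divisor lands exactly in $|2K_X|_{\mathbb{Q}}$ or needs an explicit correction term; getting a clean point of multiplicity $\ge 4$ (relative to the coefficient $\tfrac14$) rather than something slightly weaker is where the argument must be tuned.
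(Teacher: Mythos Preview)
Your overall plan --- exhibit a single effective divisor $D\sim_{\mathbb Q}2K_X$ with $\lct(X,D)\le\tfrac14$ --- is exactly what the paper does, but none of the concrete candidates you float actually lands. First, $\tfrac12\varphi^*(C)$ for $C\in|-K_Y|$ lies in the class $K_X$, not $2K_X$; you want $\varphi^*(C)$ itself. Second, $R_1+R_2+R_3=R\sim 3K_X$, not $2K_X$ (from $K_X=\varphi^*K_Y+R$ and $2K_X\sim\varphi^*(-K_Y)$ one gets $R\sim 3K_X$), so that avenue is closed too. Third, the ``triple point of $B$'' heuristic cannot work here: the construction explicitly requires that no point lies on all three $B_i$, and within a single $B_i$ the components are transverse, so you will not manufacture multiplicity $\ge 4$ from a generic $C$ through any special point of the configuration.

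The missing idea is simple and makes the proof one line: take a member of $|-K_Y|$ containing a \emph{doubled} branch component. The paper uses
\[
e \;=\; 2h_{13}+e_3+e_1+h_{24}\;\sim\;3l-e_1-e_2-e_3-e_4 \;=\; -K_Y,
\]
so that $E:=\varphi^*(e)=4H_{13}+2E_3+2E_1+2H_{24}\in|2K_X|$. At a point $P$ over $e_3\cap h_{13}$ the divisor $E$ is locally $4H_{13}+2E_3$ with $H_{13},E_3$ smooth and transverse, and Lemma~\ref{comarelct} gives $\lct(X,E)=\tfrac14$. The trick is that a branch component $h_{13}$ pulled back already carries a factor $2$, so putting it in with coefficient $2$ downstairs yields coefficient $4$ upstairs; no triple intersections are needed. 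Your last paragraph correctly flags the bookkeeping and the asymmetry, but the resolution is this doubling, not a search for triple points.
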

    
   \begin{proof}
   We shall show that there exists an effective divisor $ E \in \left| 2K_X \right|  $ such that $ \lct\left( X,E\right) = \frac{1}{4} $. Indeed, let $ P\in X $ such that $ \varphi\left( P\right) \in  e_3\cap h_{13}$. We consider the following divisor:
   	\begin{align*}
   		e:=2h_{13} + e_3 + e_1 + h_{24} \sim 3l-e_1 - e_2 - e_3 - e_4.
   	\end{align*}
   \noindent
   The divisor $E:= \varphi^{*}\left( e\right) =4H_{13} + 2E_3 + 2E_1 + 2H_{24} \sim 2K_X$. This implies that $ \lct\left( X,E\right) = \frac{1}{4} $ by Lemma \ref{comarelct}.
   \end{proof}

    \begin{Proposition}\label{lowerboundofglct}
   	$ \glct \left( X,2K_X\right) \ge \frac{1}{4} $.
   \end{Proposition}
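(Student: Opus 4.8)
The plan is to argue by contradiction: suppose there exists an effective $\mathbb{Q}$-divisor $\Omega \sim_{\mathbb{Q}} 2K_X$ with $\lct(X,\Omega) < \frac14$, so that the log pair $\left(X, \frac14 \Omega\right)$ is not log canonical at some point $P \in X$. The strategy follows \cite{MR4216579, MR4515703}. First I would record the basic numerical constraints: for any irreducible curve $C$ on $X$ we have $C \cdot 2K_X = C \cdot \varphi^{*}(-K_Y) \geq 0$, and $2K_X^2 = 10$, so $\Omega \cdot 2K_X = 5$. The first reduction is to peel off the fixed part: if a component of $\Omega$ passing through $P$ is a curve $C$ with small self-intersection or small $K_X$-degree (the exceptional curves $E_i$, the $H_{ij}$, the $T_{ii}$, and curves in $\varphi^{*}|t_i|$ and $\varphi^{*}|l|$), we can write $\Omega = aC + \Omega'$ and use the fact that these curves behave like $(-2)$- or low-degree curves, then compare with $\lct$ on $Y$ via the cover.

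The core of the argument is multiplicity estimation. By the standard log-canonicity criterion in dimension two (e.g. \cite[Exercise 6.18]{MR1492525} or the Corti-type inequalities used in the del Pezzo literature), if $\left(X, \frac14\Omega\right)$ is not log canonical at $P$ and no component of $\Omega$ through $P$ is "too negative," then $\mult_P\left(\frac14 \Omega\right) > 1$, i.e. $\mult_P \Omega > 4$. I would then bound $\mult_P \Omega$ from above by intersecting $\Omega$ with a suitable curve through $P$: one produces an effective divisor $M \sim_{\mathbb{Q}} 2K_X$ (for instance a member of $\varphi^{*}|{-K_Y}|$, or a specific configuration like the one $4H_{13}+2E_3+2E_1+2H_{24}$ appearing in the previous proposition, or a curve pulled back from a line/conic on $\mathbb{P}^2$) with $M \cdot \Omega = \Omega^2 = 10$ and $M$ having controlled local behaviour at $P$; if $\Omega$ and $M$ share no common component then $\mult_P \Omega \leq M \cdot \Omega = 10$ is too weak, so instead I would choose $M$ adapted to the location of $P$ on the branch configuration so that $M \cdot \Omega$ along a single branch through $P$ is small, forcing $\mult_P \Omega \leq 4$, a contradiction.

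Because the branch locus of the secondary Burniat surface with $K_X^2 = 5$ is not symmetric, the point $P$ must be analyzed according to where $\varphi(P)$ sits relative to the Dynkin diagram above: whether $\varphi(P)$ lies on $\tilde E_4$ (the $(-1)$-curve $e_4$, which meets four branch components), on a triple point of the configuration, on a node of the branch divisor, on a branch component but off all nodes, or off the branch locus entirely. In each stratum I would choose the auxiliary pencil on $Y$ (lines through one of the $P_i$, the conic through all four, etc.) whose pullback gives the curve $M$ that isolates $P$, run the inversion of adjunction on that curve — using that $K_X + M$ restricted to $M$ computes the different — to bound the coefficient of $\Omega|_M$ and derive $\lct \geq \frac14$. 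For the few recalcitrant cases where this direct approach fails (these are exactly the cases flagged in the introduction as needing Jiang–Zou \cite{MR4597209}), I would instead invoke the refined inequality of \cite{MR4597209} relating the log canonical threshold to intersection numbers with a movable auxiliary system, which replaces the naive multiplicity bound by a sharper one.

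The main obstacle, as anticipated, is the asymmetry: there is no single auxiliary curve that works uniformly, so the proof splits into a sizeable case analysis indexed by the position of $P$, and in the hardest configurations — typically when $\varphi(P)$ is a point where a branch curve meets $e_4$, or a point where the only low-degree curves through $P$ already appear in $\Omega$ with large coefficient — the inversion of adjunction on a curve of small genus does not immediately close the gap between the bound one gets and the bound one needs. Handling those requires either a more clever choice of the $\mathbb{Q}$-linear system (exploiting that $2K_X$ is a pullback, so one has the whole $4$-dimensional space $|{-K_Y}|$ to play with on $Y$, plus the three half-pencils $L_i$) or the Jiang–Zou estimate; keeping track of which components of $\Omega$ are $\varphi$-invariant versus permuted by the Galois group, and translating multiplicities on $X$ back to multiplicities on $Y$, is the bookkeeping that makes this step delicate.
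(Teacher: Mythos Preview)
Your outline follows the same contradiction-and-case-analysis strategy as the paper's proof, with the same core tool (inversion of adjunction on the curves $E_i$, $H_{ij}$, $T_{ii}$), so the approach is right in spirit. Two corrections are in order, however.

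First, a numerical slip: since $D\sim_{\mathbb Q} 2K_X$ one has $D\cdot 2K_X=(2K_X)^2=4K_X^2=20$, not $5$.

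Second, and more substantively, you over-anticipate the difficulty. For \emph{this} proposition the paper needs no Jiang--Zou input whatsoever. The case $\varphi(P)\notin B$ is dispatched in one line: push $D$ down to $d:=\varphi(D)\sim_{\mathbb Q}-K_Y$; by \cite[Lemma~4.1]{MR4216579} the pair $(Y,\tfrac14 d+\tfrac12 B)$ is not log canonical at $\varphi(P)$, hence neither is $(Y,\tfrac14 d)$, contradicting Cheltsov's $\glct(Y,-K_Y)=\tfrac12$ \cite{MR2465686}. For $\varphi(P)\in B$, every subcase closes with a single inversion of adjunction on one branch curve through $P$: bound its coefficient in $D$ by intersecting $D$ with a suitable $\varphi^*(t_j)$, then adjunction forces an inequality that collides with a second intersection computation (e.g.\ with $H_{13}$ or $H_{24}$). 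No ``recalcitrant'' cases survive here; the Jiang--Zou lemma \cite{MR4597209} is invoked only later, in the proofs of Theorems~\ref{MainTheorem-2} and~\ref{MainTheorem-3}, where the target constant is $4n-3$ or $4n$ rather than $4$ and the margins are genuinely tighter.
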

	\begin{proof}
	Suppose on the contrary that $ \glct \left( X,2K_X\right) < \frac{1}{4} $. There exists an effective $ \mathbb{Q} $-Cartier divisor $ D \sim_{\mathbb{Q}} 2K_X$ such that $ \left( X, \frac{1}{4}D\right)  $ is not log canonical at some point $ P \in X $. By \cite[Proposition 9.5.11]{MR2095471} ,$$\mult_P\left( D\right) >4.$$

    \noindent We denote by $ d:= \varphi\left( D\right)  $. By \cite[Lemma 4.1]{MR4216579}, $ \left( Y, \frac{1}{4}d +\frac{1}{2}B\right)  $ is not log canonical at $ \varphi\left( P\right)$ where $B:=B_1+B_2+B_3$.\\
	
	\noindent
	\textbf{Case 1}: Assume that $\varphi\left( P\right) \notin B  $. Since $ \left( Y,  \frac{1}{4}d \right)  $ is not log canonical at $ \varphi\left( P\right)  $,  $ \glct\left( Y, d\right) < \frac{1}{4} $. We notice that $ d \sim_\mathbb{Q} -K_{Y} $ and that $ \glct\left( Y, -K_{Y}\right) = \frac{1}{2} $ by \cite[Theorem 1.7]{MR2465686}. We get a contradiction.\\
	
	\noindent
	\textbf{Case 2}: Assume that $\varphi\left( P\right) \in B  $. We shall show that we get a contradiction if $P \in E_3 \cup \left( H_{13}\cup H_{23} \cup H_{34} \right) \cup T_{33} $.  Since $D \sim_{\mathbb{Q}} 2K_X$ and $2K_X \sim \varphi^*\left( 3l-e_1 - e_2 - e_3 - e_4\right)$ is symmetric with respect to $e_1, e_2, e_3, e_4$, the same proof works for $P \in E_1 \cup \left( H_{12}\cup H_{13} \cup H_{14} \right) \cup T_{11} $ and $P \in E_2 \cup \left( H_{12}\cup H_{23} \cup H_{24} \right) \cup T_{22} $.\\

    \noindent
	\textbf{Case 2.1}: Assume that $P \in E_3\setminus H_{13} $. We write
	\begin{align*}
		D = a_3E_3 + a_{13}H_{13} + \Omega,
	\end{align*}
	\noindent
	$ a_3, a_{13} \ge 0 $ and $\Omega$ is an effective $\mathbb{Q}$-divisor satisfying $ E_3,H_{13} \nsubseteq \support\left( \Omega\right) $.
	We have
	\begin{align*}
		8 = D\cdot\varphi^{*}\left( t_3\right) = \left( a_3E_3 + a_{13}H_{13} + \Omega\right)\cdot \varphi^{*}\left( t_3\right) \ge 2a_3.
	\end{align*}
	\noindent
	So $ a_3 \le 4 $. Since $ \left( X, \frac{1}{4}D\right)  $ is not log canonical at $ P $ and $ \frac{a_3}{4} \le 1$, $ \left( X, E_3 +\frac{1}{4}\Omega\right)  $ is not log canonical at $ P $. By the inversion of adjunction formula, $ \left( E_3, \frac{1}{4}\Omega\mid_{E_3}  \right)  $ is not log canonical at $P$. So 
	$$ \mult_P\left( \Omega\mid_{E_3} \right) >4. $$
	\noindent
	This yields that 
	\begin{align*}
		2 + a_3  - a_{13}  &=\left(  D- a_3E_3 - a_{13}H_{13} \right)\cdot E_3 \\ 
        &= \Omega\cdot E_3 \geq \mult_P\left( \Omega\mid_{E_3} \right) >4.
	\end{align*}
	
    \noindent
    On the other hand, we have
	\begin{align*}
		2 = D\cdot H_{13} = \left( a_3E_3 + a_{13}H_{13} + \Omega\right)\cdot H_{13} \ge a_3 - a_{13}.
	\end{align*}
 
	\noindent 
	These above inequalities imply that $ 2 < a_3 - a_{13} \le 2$ which is a contradiction.\\

    \noindent
	\textbf{Case 2.2}: The previous proof works for $P \in H_{13}\setminus E_{3} $, $P \in H_{23}\setminus E_{3} $ and $P \in H_{34}\setminus E_{3} $.\\

	\noindent
	\textbf{Case 2.3}: Suppose that $P \in E_3 \cap H_{13}$.\\
    \noindent
    Write
    $$D = a_3E_3 + a_{13}H_{13} + a_{24}H_{24} + \Omega,$$
    \noindent
    where $a_3, a_{13}, a_{24} \ge 0$ and $E_3, H_{13}, H_{24} \nsubseteq \support(\Omega)$. We have
    \begin{align*}
        8=D\cdot \varphi^{*}(t_2)= (a_3E_3 + a_{13}H_{13} + a_{24}H_{24} + \Omega)\cdot\varphi^{*}(t_2) \ge2a_{13}.
    \end{align*}
    \noindent
    So $ a_{13} \le 4 $. Since $ \left( X, \frac{1}{4}D\right)  $ is not log canonical at $ P $ and $ \frac{a_{13}}{4} \le 1$, $ \left( X, H_{13} +\frac{1}{4}(a_3E_3 +\Omega)\right)  $ is not log canonical at $ P $. By the inversion of adjunction formula $ \left( H_{13}, \frac{1}{4}(a_3E_3 +\Omega)\mid_{H_{13}}  \right)  $ is not log canonical at $ P $. Thus
    \begin{align*}
    2+a_{13}-a_{24}&=H_{13}\cdot (D-a_{13}H_{13}-a_{24}H_{24})=H_{13}\cdot(a_3E_3+\Omega)\\
&\geq \mult_P\left( H_{13} \right)\mult_P\left( a_3E_3+\Omega \right) >4
    \end{align*}

    \noindent
    On the other hand, one has 
    \begin{align*}
        0\leq \Omega \cdot H_{24} = (D - a_3E_3 - a_{13}H_{13} -a_{24}H_{24})\cdot H_{24} = 2 - a_{13} + a_{24}.
    \end{align*}
    The two above inequalities imply that $2 < a_{13}-a_{24} \le 2$ which is a contradiction. \\

	\noindent
	\textbf{Case 2.4}: Assume that $P \in T_{33}  $. Write
	\begin{align*}
		D = a_{33}T_{33} + \Omega,
	\end{align*}
	\noindent
	$ a_{33} \ge 0 $ and $ T_{33} \nsubseteq \support\left( \Omega\right) $. We have
	\begin{align*}
		8 = D\cdot\varphi^{*}\left( t_2\right) \ge 2a_{33}.
	\end{align*}
	\noindent
	So $ a_{33} \le 4 $. Since $ \left( X, \frac{1}{4}D\right)  $ is not log canonical at $ P $ and $ \frac{a_{33}}{4} \le 1$,  $ \left( X, T_{33} +\frac{1}{4}\Omega\right)  $  is not log canonical at $ P $. By the inversion of adjunction formula $ \left( T_{33}, \frac{1}{4}\Omega\mid_{T_{33}}  \right)  $ is not log canonical at $ P $. So 
	$$ T_{33}\cdot\Omega \ge \mult_P\left( \Omega\mid_{T_{33}} \right) >4. $$
	
	\noindent
	However, $ 4 = T_{33}\cdot \left( D - a_{33}T_{33}\right) = T_{33}\cdot \Omega $ which is a contradiction. We complete the proof of Proposition \ref{lowerboundofglct}.
\end{proof}

\section{Even pluricanonical linear system}
This section is devoted to prove Theorem \ref{MainTheorem-2}. It is clear that for any $D_0\in |2nK_X|_0$, $\lct\left( X,D_{0}\right) \ge \frac{1}{4n}$ is an immediate consequence of Theorem \ref{MainTheorem1}. The proof is completed by showing that $\lct\left( X,D_{i}\right) \ge \frac{1}{4n-3}$ for any $D_{i} \in \left| 2nK_X\right|_i  $ and all $i = 1,2,3$. Without loss of generality we may assume that $i=1$ by Lemma \ref{pluricanonical_system_bidouble_covers}.\\

\noindent
 We first note that $\lct_P(X,\overline{D}_1) = \frac{1}{4n-3}$ by Lemma \ref{comarelct} where $P \in H_{12}\cap H_{34}$ and   
    $$\overline{D}_1 = R_2 + R_3 + (4n-4)H_{12} + (2n-2)H_{34} + 2nE_1 + (2n-4)E_2 \in \left| 2nK_X\right|_1. $$

\noindent
    Suppose on the contrary that $\lct\left( X,D_{1}\right) < \frac{1}{4n-3}$ for some member $D_1 \in \left| 2nK_X\right|_1$. Then    $(X,\frac{1}{4n-3}D_1)$ is not log canonical at some point $P$. We have $\mult_P(D_1) > 4n-3$. Let us denote by $d_1:= \varphi(D_1)$. By \cite[Lemma 4.1]{MR4216579}, $(Y, \frac{1}{4n-3}d_1 + \frac{1}{2}B)$ is not log canonical at $\varphi(P)$. Assume that $\varphi(P) \notin B$. Then $(Y, \frac{1}{4n-3}d_1)$ is not log canonical at $\varphi(P)$. So $\glct(Y, d_1) < \frac{1}{4n-3}$. We notice that $d_1 \sim_{\mathbb{Q}} n(-K_{Y})$ and $\glct(Y, -K_{Y}) \ge \frac{1}{2}$ by \cite[Theorem 1.7]{MR2465686}. Thus $\glct(Y, d_1) \ge \frac{1}{2n}$ which is a contradiction. Therefore $\varphi(P) \in B=B_1 + B_2 + B_3$.\\

\noindent
By Lemma \ref{pluricanonical_system_bidouble_covers},
        \begin{align*}
        \left| 2nK_X\right|_1 = &(E_2+H_{13}+H_{34}+T_{33})+(E_3+H_{12}+H_{14}+T_{11})  \\ &+\left| \varphi^{*}((3n-3)l-(n-2)e_1-ne_2-(n-1)e_3-(n-1)e_4)\right|.\end{align*}

    \noindent

     \noindent It remains to show that we get a contradiction if $\varphi(P) \in B=B_1 + B_2 + B_3$.\\

 \noindent
    {\bf Case 1:} $P\in E_1 \cap H_{12} $.\\
    Write
    $$D_1 = a_1E_1 + a_2E_2+ a_{12}H_{12} + a_{34}H_{34} + \Omega,$$
    \noindent
    where $a_1 \ge 0$, $a_2, a_{12}, a_{34} \ge 1$ and $E_1, E_2, H_{12}, H_{34} \nsubseteq \support(\Omega)$. 
    Now 
    \begin{align*}&\Omega\cdot \frac{1}{2}\varphi^*(t_3)\geq (H_{13}+T_{33}+E_3+H_{14}+T_{11})\cdot \frac{1}{2}\varphi^*(t_3)=3.
    \end{align*}
    Thus $4n=D_1\cdot T_{33}= a_{12}+\Omega\cdot T_{33}\geq a_{12}+3$ which means $a_{12}\leq 4n-3$.

    \noindent
    So $(X,H_{12} + \frac{1}{4n-3}(a_1E_1+\Omega))$ is not log canonical at the point $P$. The inversion of adjunction formula gives that the pair $(H_{12},\frac{1}{4n-3}(a_1E_1+\Omega)|_{H_{12}})$ is not log canonical at $P$. Thus
    \begin{align*}
    &2n-a_2+a_{12}-a_{34}=(D_1- a_2E_2- a_{12}H_{12} - a_{34}H_{34})\cdot H_{12}\\
    &\geq 
    \mult_P(H_{12})\mult_P(a_1E_1+\Omega)> 4n-3.
    \end{align*}
    As $a_2\geq 1$, we have $a_{12}-a_{34}> 2n-2$.
    On the other hand, one has 
    \begin{align*}
    &\Omega\cdot H_{34}\geq (H_{13}+T_{33}+E_3+H_{14}+T_{11})\cdot H_{34}=2
    \end{align*}
    which implies \[2n=D_1\cdot H_{34}=a_{12}-a_{34}+\Omega\cdot H_{34}\geq a_{12}-a_{34}+2>2n-2+2=2n.\] 
    It is a contradiction.\\

    \noindent
    {\bf Case 2:} $P\in E_1 \setminus H_{12} $.\\
    Write
    $$D_1 = a_1E_1 + a_{12}H_{12} +  a_{34}H_{34}+ \Omega,$$
    \noindent
    where $a_1 \ge 0, a_{12}, a_{34} \ge 1$ and $E_1, H_{12}, H_{34} \nsubseteq \support(\Omega)$. We notice that $\Omega \ge E_2 + H_{13} + T_{33} + E_3 + H_{14}+ T_{11}$.
    \noindent
    Then
    \begin{align*}
        2n &= D_1\cdot H_{12} = a_1 - a_{12} + a_{34}+\Omega \cdot H_{12}\ge a_{1} -a_{12} +a_{34}+ 2 \textup{ and }\\
        2n &= D_1\cdot H_{34} =a_{12} -a_{34}+\Omega \cdot H_{34}\ge   a_{12} -a_{34}+2
    \end{align*}
    which yield $$a_1\leq 4n-4<4n-3.$$

    \noindent
    So $(X,E_1 + \frac{1}{4n-3}\Omega)$ is not log canonical at the point $P$. The inversion of adjunction formula implies that the pair $(E_1,\frac{1}{4n-3}\Omega|_{E_1})$ is not log canonical at $P$. Thus $$\mult_P(E_1)\mult_P(\Omega) > 4n-3.$$

    \noindent
    This yields that
    $$2n +a_1 -a_{12} = E_1\cdot(D_1 - a_1E_1 -a_{12}H_{12} -a_{34}H_{34}) = E_1\cdot \Omega > 4n-3.$$
    \noindent
    This inequality together with the above one 
    $$2n \ge a_1 - a_{12} + a_{34}+2 \geq a_1 - a_{12} +3$$
    \noindent
    implies that $2n \ge a_1 - a_{12} +3 > 2n $ which is a contradiction.\\

    \noindent
    {\bf Case 3:} $P\in H_{14} \setminus E_1 $.\\
    Write 
    $$D_1 = R_2 + R_3 + D'_1,$$
    \noindent
    where $D'_1 \in \left| \varphi^{*}((3n-3)l-(n-2)e_1-ne_2-(n-1)e_3-(n-1)e_4)\right|.$ Since $(X, \frac{1}{4n-3}D_1)$ is not log canonical at $P$, $(X, \frac{1}{4n-3}(H_{14} + T_{33} +D'_1))$ is not log canonical at $P$. Write
    $$D'_1 = a_{14}H_{14} + a_1E_1 + \Omega,$$
    \noindent
    where $a_{14}, a_1 \ge 0$ and $H_{14},E_1 \nsubseteq \support(\Omega)$. We have
    $$8n - 8 = D'_1\cdot\varphi^*(t_3) \ge 2a_{14}.$$
    \noindent
    So $a_{14}+1 \le 4n - 3 $. This implies that $(X, H_{14} + \frac{1}{4n-3}(T_{33} + \Omega))$ is not log canonical at $P$. We obtain that
    $$\mult_P(H_{14})\mult_P(T_{33} + \Omega) > 4n-3.$$
    \noindent
    Thus
    \begin{align*}
        1 + 2n +a_{14} -a_1 &\ge \mult_P(H_{14})\mult_P(T_{33} + D'_1 - a_{14}H_{14} - a_1E_1)\\
        &=\mult_P(H_{14})\mult_P(T_{33} + \Omega)\\
        &> 4n -3.
    \end{align*}
    \noindent
    Therefore $a_{14} -a_1 > 2n -4$.\\

    \noindent
    On the other hand, one has
    $$0 \le E_1\cdot\Omega = E_1\cdot(D'_1 - a_{14}H_{14} - a_1E_1 ) = 2n - 4 - a_{14} + a_1.$$
    \noindent
    Thus $a_{14}-a_1 \le 2n -4$. This leads to the fact that $$a_{14} -a_1 > 2n -4\ge a_{14}-a_1$$ which is a contradiction.\\

    \noindent
    {\bf Case 4:}  $P\in E_2\cap H_{23}$.\\
    Write
    $$D_1 = a_2E_2 + a_{23}H_{23}+a_{24}H_{24} + \Omega,$$
    \noindent
    where $a_2 \ge 1, a_{23}, a_{24} \ge 0$ and $E_2, H_{23}, H_{24} \nsubseteq \textup{supp}(\Omega)$. 
    Now 
    \begin{align*}&\Omega\cdot \frac{1}{2}\varphi^*(t_2)\geq (R_2+R_3-E_2)\cdot \frac{1}{2}\varphi^*(t_2)\\
    &=(H_{13}+H_{34}+T_{33}+E_3+H_{12}+H_{14}+T_{11})\cdot T_{22}=5.
    \end{align*}
    Thus, $4n=D_1\cdot T_{22}= a_{2}+\Omega\cdot T_{22}\geq a_{2}+5$. So $a_{2}\leq 4n-5< 4n-3$.

    \noindent
    We get that $(X,E_2 + \frac{1}{4n-3}(a_{23}H_{23}+\Omega))$ is not log canonical at the point $P$. The inversion of adjunction formula yields that the pair $(E_2,\frac{1}{4n-3}(a_{23}H_{23}+\Omega)|_{E_{2}})$ is not log canonical at $P$. Thus 
    \begin{align*}
    &2n+a_2-a_{24}=(D_1- a_{2}E_2 - a_{24}H_{24})\cdot E_{2}\\
    &\geq 
    \mult_P(E_2)\mult_P(a_{23}H_{23}+\Omega)\\
    &> 4n-3.
    \end{align*}
    We have $a_{2}-a_{24}> 2n-3$.
    On the other hand, one sees  
    \begin{align*}
    &\Omega\cdot H_{24}\geq (R_2+R_3-E_2)\cdot H_{24}\\
    &=(H_{13}+H_{34}+T_{33}+E_3+H_{12}+H_{14}+T_{11})\cdot H_{24}=3,
    \end{align*}
    which implies \[2n=D_1\cdot H_{24}=a_{2}-a_{24}+\Omega\cdot H_{24}\geq a_{2}-a_{24}+3>2n-3+3=2n.\] 
    It is a contradiction.\\

    \noindent
    {\bf Case 5:}  $P\in E_2\setminus  H_{23}$.\\
    We have a contradiction similar to Case 2. Indeed, 
    write
    $$D_1 = a_2E_2  + a_{14}H_{14} + a_{23}H_{23} + \Omega,$$
    \noindent
    where $a_{2}, a_{14} \ge 1$, $a_{23}\ge 0$ and $E_2, H_{14}, H_{23} \nsubseteq \support(\Omega)$.\\ 
    \noindent
    We notice that $\Omega \ge H_{34} +T_{33} +E_{3} + H_{12}+ H_{13} + T_{11}$. Then 
    \begin{align*}
        2n &= D_1\cdot H_{14} = -a_{14} +a_{23} + \Omega \cdot H_{14}\ge -a_{14} +a_{23} +1, \textup{ and }\\
        2n &= D_1\cdot H_{23} = a_2 +a_{14} -a_{23} + \Omega \cdot H_{23}\ge a_2 +a_{14} -a_{23} +2
    \end{align*}
    \noindent
    which yield that
    $$ a_2 \le 4n-3.$$

    \noindent
    So we get $(X,E_2 + \frac{1}{4n-3}\Omega)$ is not log canonical at the point $P$. The inversion of adjunction formula yields that the pair $(E_2,\frac{1}{4n-3}\Omega|_{E_2})$ is not log canonical at $P$. Thus
    $$\mult_P(E_2)\mult_P(\Omega) > 4n-3.$$

    \noindent
    We notice that
    $$E_2\cdot \Omega = E_2\cdot(D_1 - a_2E_2 -a_{14}H_{14} - a_{23}H_{23}) = 2n +a_2 -a_{23}.$$
    So 
    $$2n +a_2 - a_{23} > 4n-3.$$
    \noindent
    This inequality together with the above  inequality 
    $$2n \ge a_2+ a_{14} - a_{23}  +2 \ge a_2 - a_{23} + 3$$
    \noindent
    leads to a contradiction that $2n\geq a_2-a_{23}+3>2n.$\\

    \noindent
    {\bf Case 6:} $P\in (H_{12} \setminus (E_1 \cup E_2)) \cup (H_{13} \setminus (E_1 \cup E_3))$.\\
    Suppose that $P\in H_{12} \setminus (E_1 \cup E_2)$. Write 
    $$D_1 = R_2 + R_3 + D'_1,$$

    \noindent
    where $D'_1 \in \left| \varphi^{*}((3n-3)l-(n-2)e_1-ne_2-(n-1)e_3-(n-1)e_4)\right|.$  Since $(X, \frac{1}{4n-3}D_1)$ is not log canonical at $P$, $(X, \frac{1}{4n-3}(H_{34} + T_{33} + H_{12} +D'_1))$ is not log canonical at $P$. Write
    $$D'_1 = a_{12}H_{12} + a_1E_1 + a_2E_2 + \Omega,$$
    \noindent
    where $a_{12}, a_1, a_2 \ge 0$ and $H_{12},E_1,E_2 \nsubseteq \support(\Omega)$. We have
    $$8n - 8 = D'_1\cdot\varphi^*(t_3) \ge 2a_{12}.$$
    \noindent
    So $a_{12}+1 \le 4n - 3 $. This implies that $(X, H_{12} + \frac{1}{4n-3}(H_{34} + T_{33} + \Omega))$ is not log canonical at $P$. Then
    $$\mult_P(H_{12})\mult_P(H_{34} + T_{33} + \Omega) > 4n-3.$$
    \noindent
    Thus
    \begin{align*}
        1 + 2n - 2 +a_{12} -a_1 - a_2 &\ge \mult_P(H_{12})\mult_P(H_{34} + T_{33} + D'_1 - a_{12}H_{12} - a_1E_1 - a_2E_2)\\
        &=\mult_P(H_{12})\mult_P(H_{34} + T_{33} + \Omega)\\
        &> 4n -3.
    \end{align*}
    \noindent
    Therefore $a_{12} > 2n -2+a_1 +a_2$.\\

    \noindent
    On the other hand, one has 
    $$0 \le E_1\cdot\Omega = E_1\cdot (D'_1 - a_{12}H_{12} - a_1E_1 - a_2E_2) = 2n - 4 - a_{12} + a_1.$$
    \noindent
    Thus $a_{12} \le 2n -4+a_1$. This leads to the fact that $a_2 < -2$ which is a contradiction.\\

    \noindent
    Similarly, we get a contradiction if $P\in H_{13} \setminus (E_1 \cup E_3)$.\\

    \noindent
    {\bf Case 7:} $P\in E_3\cap H_{23}$.  \\
 We shall obtain a contradiction analogous to Cases 3 and 6. 

     %We will hide the following later. 
\noindent
Write 
$$D_1 = R_2 + R_3 + D'_1,$$
    \noindent
   where $D'_1 \in \left| \varphi^{*}((3n-3)l-(n-2)e_1-ne_2-(n-1)e_3-(n-1)e_4)\right|.$  Since $(X, \frac{1}{4n-3}D_1)$ is not log canonical at $P$, $(X, \frac{1}{4n-3}(E_3+D'_1))$ is not log canonical at $P$. Write
$$D'_1 = a_{23}H_{23} + a_2E_2 + \Omega,$$
    \noindent
    where $a_{23}, a_2 \ge 0$ and $H_{23},E_2 \nsubseteq \support(\Omega)$. We have
    $$8n - 8 = D'_1\cdot\varphi^*(t_4) \ge 2a_{23}.$$
    \noindent
    So $a_{23} \le 4n - 4 <4n-3 $. This implies that $(X, H_{23} + \frac{1}{4n-3}(E_3+ \Omega))$ is not log canonical at $P$. Then 
    $$\mult_P(H_{23})\mult_P(E_3 + \Omega) > 4n-3.$$
    \noindent
    Thus
    \begin{align*}
        1 + 2n-4 +a_{23} -a_2 &\ge 1+\mult_P(H_{23})\mult_P( D'_1 - a_{23}H_{23} - a_2E_2)\\
        &= 1+\mult_P(H_{23})\mult_P( \Omega)\\
        &\geq \mult_P(H_{23})\mult_P(E_3 + \Omega)
        > 4n -3.
    \end{align*}
    \noindent
    Therefore, $a_{23} -a_2 > 2n$.\\

    \noindent
    On the other hand, we have
    $$0 \le E_2\cdot\Omega = E_2\cdot(D'_1 - a_{23}H_{23} - a_2E_2 ) = 2n - a_{23} + a_2.$$
    \noindent 
    This leads to a contradiction that $$a_{23} -a_2 > 2n\ge a_{23}-a_2.$$

    \noindent
    {\bf Case 8:} $P\in E_3 \setminus H_{23}$.\\
We get a contradiction similar to Case 5.\\

\iffalse
%%%%%%%%%%%%%%%%%%%%%%%%%%%%%
    We will hide the following later. 
        \noindent
    Write
    $$D_1 = a_3E_3 +a_{14}H_{14}+ a_{23}H_{23}+ \Omega,$$
    \noindent
    where $a_{23} \ge 0, a_3, a_{14} \ge 1$ and $E_3, H_{14}, H_{23} \nsubseteq \support(\Omega)$.\\ 
    \noindent
    We notice that $\Omega \ge E_2+H_{13}+H_{34}+T_{33}  + H_{12} + T_{11}$. Then 
    \begin{align*}
        2n &= D_1\cdot H_{14} = -a_{14} +a_{23} + \Omega \cdot H_{14}\ge -a_{14} +a_{23} +1, \textup{ and }\\
        2n &= D_1\cdot H_{23} = a_3 +a_{14} -a_{23} + \Omega \cdot H_{23}\ge a_3 +a_{14} -a_{23} +2
    \end{align*}
    \noindent
    which yield that
    $$ a_3 \le 4n-3.$$

    \noindent
    So we get $(X,E_3 + \frac{1}{4n-3}\Omega)$ is not log canonical at the point $P$. The inversion of adjunction formula yields that the pair $(E_3,\frac{1}{4n-3}\Omega|_{E_3})$ is not log canonical at $P$. Thus 
    $$\mult_P(E_3)\mult_P(\Omega) > 4n-3.$$

    \noindent
    We notice that
    $$E_3\cdot \Omega = E_3\cdot(D_1 - a_3E_3  - a_{14}H_{14}-a_{23}H_{23}) = 2n +a_3 - a_{23}.$$
    So 
    $$a_3 - a_{23} > 2n-3.$$
    \noindent
    This inequality together with the above  inequality 
    $$2n \ge a_3 + a_{14} - a_{23}  +2 \ge a_3 - a_{23} + 3$$
    \noindent
    implies that $2n\geq a_{3} - a_{23}+3 >2n-3+3=2n$ which is a contradiction.\\

%%%%%%%%%%%%%%%%%%%%%%%%%%%%%
\fi

    \noindent
    {\bf Case 9:} $P\in (H_{23} \setminus E_2) \cup (H_{24} \setminus E_2 )$.\\
    Suppose that $P\in H_{23} \setminus E_2$. Write 
    $$D_1 = R_2 + R_3 + D'_1,$$

    \noindent
    where $D'_1 \in \left| \varphi^{*}((3n-3)l-(n-2)e_1-ne_2-(n-1)e_3-(n-1)e_4)\right|.$  Since $(X, \frac{1}{4n-3}D_1)$ is not log canonical at $P$, $(X, \frac{1}{4n-3}(E_3  + H_{14} +T_{11}+D'_1))$ is not log canonical at $P$. Write
    $$D'_1 = a_{23}H_{23} + a_2E_2  + \Omega,$$
    \noindent
    where $a_{23}, a_2 \ge 0$ and $H_{23},E_2 \nsubseteq \support(\Omega)$. We have
    $$8n - 8 = D'_1\cdot\varphi^*(t_4) \ge 2a_{23}.$$
    \noindent
    So $a_{23} \le 4n - 4<4n-3 $. This implies that $(X, H_{23} + \frac{1}{4n-3}(E_3+H_{14} + T_{11} + \Omega))$ is not log canonical at $P$.
    % So we obtain that$$\mult_P(H_{23})\mult_P(E_3+H_{14} + T_{11} + \Omega) > 4n-3.$$
    \noindent
    Thus
    \begin{align*}
        1 + 2n - 4 +a_{23} -a_2 &\ge \mult_P(H_{23})\mult_P(E_3+H_{14} + T_{11} + D'_1 - a_{23}H_{23} - a_2E_2 )\\
        &=\mult_P(H_{23})\mult_P(E_3+H_{14} + T_{11} + \Omega)\\
        &> 4n -3.
    \end{align*}
    \noindent
    Therefore $a_{23}-a_2>2n$.\\

    \noindent
    On the other hand, one has 
    $$0 \le E_2\cdot\Omega = E_2\cdot (D'_1 - a_{23}H_{23} - a_2E_2) = 2n - a_{23} + a_2.$$
    \noindent
    It induces a contradiction that 
    $$ a_{23}-a_2 > 2n \ge a_{23} -a_2.$$

    \noindent
    Similarly, we get a contradiction if $P\in H_{24}\setminus E_2$.\\

    \noindent
    {\bf Case 10:} $P\in H_{34} \setminus E_3 $.\\
    Analogously to Case 3, we get a contradiction.\\

    \noindent
    {\bf Case 11:} $P\in T_{22}  $.\\
    \noindent
    Write 
    $$D_1 = R_2 + R_3 + D'_1,$$
    \noindent
    where $D'_1 \in \left| \varphi^{*}((3n-3)l-(n-2)e_1-ne_2-(n-1)e_3-(n-1)e_4)\right|.$  Since $(X, \frac{1}{4n-3}D_1)$ is not log canonical at $P$, $(X, \frac{1}{4n-3}(E_2 + H_{13}+H_{34}+ T_{33} + H_{14}+T_{11}+D'_1))$ is not log canonical at $P$. Write
    $$D'_1 = a_{22}T_{22}+ \Omega,$$
    \noindent
    where $a_{22}\ge 0$ and $T_{22} \nsubseteq \support(\Omega)$. Then 
    $$8n - 8 = D'_1\cdot\varphi^*(t_3) \ge 2a_{22}.$$
    \noindent
    This implies that $(X, T_{22} + \frac{1}{4n-3}(E_2 + H_{13}+H_{34}+ T_{33} + H_{14}+T_{11}+\Omega))$ is not log canonical at $P$. %So we obtain that$$\mult_P(T_{22})\mult_P(E_2 + H_{13}+H_{34}+ T_{33} + H_{14}+T_{11}+\Omega) > 4n-3.$$
    \noindent
    So
    \begin{align*}
        &4n-5\ge 1+\mult_P(T_{22})\mult_P( D'_1 ) \\ &\ge \mult_P(T_{22})\mult_P(E_2 + H_{13}+H_{34}+ T_{33} + H_{14}+T_{11}+ D'_1 - a_{22}T_{22}) \\
        &=\mult_P(T_{22})\mult_P(E_2 + H_{13}+H_{34}+ T_{33} + H_{14}+T_{11}+\Omega)\\
        &> 4n -3
    \end{align*}
    \noindent
    which is a contradiction.\\

    \noindent
    {\bf Case 12:} $P\in T_{33}  $.\\
    \noindent
    We obtain a contradiction similar to Case 11.\\
   
    \noindent
    {\bf Case 13:} $P \in T_{11}$.

    \noindent By previous cases, it is enough to assume that $P\in T_{11}\setminus (E_1\cup H_{23}\cup H_{24}\cup H_{34}\cup T_{22}\cup T_{33})$. 
    Let $d_1:=\varphi(D_1).$ 
    Write 
    $$d_1 = a_{2}'e_2+a_{13}'h_{13}+a_{14}'h_{14}+a_{34}'h_{34}+a_{11}'t_{11}+a_{33}'t_{33} + \Omega',$$ where $a_{2}',a_{13}',a_{14}',a_{34}',a_{11}',a_{33}'\ge 0$ and $\Omega'$ is an effective divisor on $Y$ such that  $e_{2},h_{13}, h_{14},h_{34},t_{11},t_{33}\nsubseteq \support(\Omega')$. Since $\varphi^*(d_1)=\varphi^*(\varphi(D_1))\geq D_1$ and 
    $$D_1\geq R_2+R_3= E_2+H_{13}+H_{34}+T_{33}+E_{3}+H_{12}+H_{14}+T_{11}$$ where $\varphi^*(e_2)=2E_{2},  \varphi^*(h_{13})=2H_{13}, \varphi^*(h_{14})=2H_{14}, \varphi^*(h_{34})=2H_{34}, \varphi^*(t_{33})=2T_{33}$ (cf. \cite[Section 2]{MR0755236}, \cite[Section 3.2]{MR1802792}, \cite{MR1103912}), one has $\min\{a_{2}',a_{13}',a_{14}',a_{34}',a_{33}'\}\geq \frac{1}{2}$.  Thus $$d_1\cdot t_2\geq (a_{2}'e_2+a_{13}'h_{13}+a_{14}'h_{14}+a_{34}'h_{34}+a_{11}'t_{11}+a_{33}'t_{33})\cdot t_2\ge  a_{11}'+\frac{5}{2}.$$ As  $d_1\sim_{\mathbb{Q}} n(-K_{Y})$, we see 
    $$d_1\cdot t_2=n(3l-e_1-e_2-e_3-e_4)\cdot t_2=2n,$$ which yields $a_{11}'\leq 2n-\frac{5}{2}$.

    Suppose now that $(X,\frac{1}{4n-3}D_1)$ is not log canonical at $P$. Then  $(Y, \frac{1}{4n-3}d_1 + \frac{1}{2}B)$ is not log canonical at $\varphi(P)$ by \cite[Lemma 4.1]{MR4216579}. The assumption $P\in T_{11}\setminus (E_1\cup H_{23}\cup H_{24}\cup H_{34}\cup T_{22}\cup T_{33})$ implies that $(Y, \frac{1}{4n-3}d_1 + \frac{1}{2}t_{11})$ is not log canonical at $\varphi(P)$.  
    Since $a_{11}'\leq 2n-\frac{5}{2}$ and $\varphi(P)\not\in e_2\cup h_{13}\cup h_{14}\cup h_{34}\cup t_{33}$, we have $$\frac{a_{11}'}{4n-3}+\frac{1}{2}<1$$ and $(Y, t_{11}+\frac{1}{4n-3}\Omega')$ is not log canonical at $\varphi(P)$. So 
    $$\mult_{\varphi(P)}(t_{11})\mult_{\varphi(P)}(\Omega') > 4n-3.$$ 
    Hence 
    \begin{align*}
    2n&=nt_1\cdot(3l-e_1 - e_2 - e_3 - e_4) = t_{11}\cdot d_1\\ 
    &\geq t_{11}\cdot \Omega'
    \geq \mult_{\varphi(P)}(t_{11})\mult_{\varphi(P)}(\Omega') > 4n-3,
    \end{align*}
    which contradicts to the assumption $n\ge 2$.\\

\section{Odd pluricanonical linear system}
In this section we prove Theorem \ref{MainTheorem-3}.
    \subsection{The invariant part.} 
        In this part we show that $\lct(X,D_0)\geq \frac{1}{4n-3}$ for any $D_0\in |(2n+1)K_X|_0$. %the pair $(X,\frac{1}{4n-3}D_0)$ is log canonical. 
         
         Denote by 
    $$\overline{D}_0:=R+(4n-4)H_{13}+(2n-2)E_3+(2n-2)E_1+(2n-2)H_{24}\in |(2n+1)K_X|_0.$$

    \noindent
    By Lemma \ref{comarelct}, $\lct_P(X,\overline{D}_0) = \frac{1}{4n-3}$ where $P \in H_{13}\cap E_{3}$.

        Suppose on the contrary that there is a member $D_0\in |(2n+1)K_X|_0$ such that $(X,\frac{1}{4n-3}D_0)$ is not log canonical at some point $P$. 
By Lemma \ref{pluricanonical_system_bidouble_covers} \[|(2n+1)K_X|_0=R+\varphi^*|-(n-1)K_Y|,\] and the global log canonical threshold of the pair $(X,2(n-1)K_X)$ is $1/(4n-4)$ by Theorem \ref{MainTheorem1}. One sees that the point $P$ is contained in $R$. Write 
\[D_0=R+D'\]
for some member $D'\in |2(n-1)K_X|$.\\

We shall show that we get a contradiction if $P \in E_3 \cup (H_{13} \cup H_{23} \cup H_{34}) \cup  T_{33}$. The same proof works for $P \in E_1 \cup (H_{12} \cup H_{13} \cup H_{14}) \cup T_{11}$ and $P \in E_2 \cup (H_{12} \cup H_{23} \cup H_{24}) \cup T_{22}$.\\

\noindent
{\bf Case 1:} $ P\in E_3\cap H_{13}$.\\
     Since $(X,\frac{1}{4n-3}D_0)$ is not log canonical at $P$, the pair $(X,\frac{1}{4n-3}(E_3+H_{13}+D'))$ is not log canonical at $P$.
    Write
    $$D' = a_3E_3 + a_{13}H_{13} + a_{24}H_{24} + \Omega,$$
    \noindent
    where $a_3, a_{13}, a_{24} \ge 0$ and $E_3, H_{13}, H_{24} \nsubseteq \support(\Omega)$. 
    \begin{align*}
        8n-8=D'\cdot \varphi^{*}(t_2) = (a_3E_3 + a_{13}H_{13} + a_{24}H_{24} + \Omega)\cdot\varphi^{*}(t_2) \ge2a_{13}.
    \end{align*}
    \noindent
    So $ a_{13}+1 \le 4n-3 $. Thus $ \left( X, H_{13} +\frac{1}{4n-3}((a_3+1)E_3 +\Omega)\right)  $ is not log canonical at $ P $. By the inversion of adjunction formula, $ \left( H_{13}, \frac{1}{4n-3}((a_3+1)E_3 +\Omega)\mid_{H_{13}}  \right)  $ is not log canonical at $ P $. Then 
    \begin{align*}
    1+2n-2+a_{13}-a_{24}&=H_{13}\cdot (E_3 + D'-a_{13}H_{13}-a_{24}H_{24})=H_{13}\cdot((a_3+1)E_3+\Omega)\\
&\geq \mult_P\left( H_{13} \right)\mult_P\left( (a_3+1)E_3+\Omega \right) >4n-3.
    \end{align*}

    \noindent
    On the other hand, one has 
    \begin{align*}
        0\leq \Omega \cdot H_{24} = (D' - a_3E_3 - a_{13}H_{13} -a_{24}H_{24})\cdot H_{24} = 2n-2 - a_{13} + a_{24}.
    \end{align*}
    The two above inequalities imply that $2n-2 < a_{13}-a_{24} \le 2n-2$ which is a contradiction.\\

\noindent 
{\bf Case 2:} $P\in E_3\setminus H_{13}.$ 
\\
Since $(X,\frac{1}{4n-3}D_{0})$ is not log canonical at $P$, $(X,\frac{1}{4n-3}(E_3 + H_{23} + H_{34} +T_{33} + D^{'})$ is not log canonical at $P$. Write
$$D^{'} = a_3E_3 + a_{13}H_{13}+ a_{24}H_{24} +\Omega,$$
\noindent
where $a_3, a_{13}, a_{24} \ge 0$ and $E_3, H_{13}, H_{24} \nsubseteq \support(\Omega)$. We have
\begin{align*}
        2n -2&= D'\cdot H_{13} = a_3 - a_{13} + a_{24}+\Omega \cdot H_{13}\ge a_{3} -a_{13} +a_{24} \textup{ and }\\
        2n -2&= D'\cdot H_{24} =a_{13} -a_{24}+\Omega \cdot H_{24}\ge   a_{13} -a_{24}
    \end{align*}
    which yield $$a_3+1\leq 4n-3.$$
\noindent
\noindent
    So $(X,E_3 + \frac{1}{4n-3}(H_{23} + H_{34} +T_{33} +\Omega))$ is not log canonical at the point $P$. The inversion of adjunction formula implies that the pair $(E_3,\frac{1}{4n-3}(H_{23} + H_{34} +T_{33} +\Omega)|_{E_3})$ is not log canonical at $P$. Thus $$1+2n -2 +a_3 -a_{13} \ge \mult_P(E_3)\mult_P(H_{23} + H_{34} +T_{33} +\Omega) > 4n-3.$$
    
    \noindent
    This inequality together with the above one 
    $$2n -2\ge a_3-a_{13} + a_{24} \geq a_3 - a_{13}$$
    \noindent
    implies that $2n -2 \ge a_3 - a_{13}  > 2n -2$ which is a contradiction.\\

\noindent 
{\bf Case 3:} $P\in (H_{13}\setminus  E_{3}) \cup (H_{23}\setminus  E_{3}) \cup (H_{34}\setminus  E_{3}).$ 
\\
Assume that $P\in H_{13}\setminus  E_{3}$. Since $(X,\frac{1}{4n-3}D_{0})$ is not log canonical at $P$, $(X,\frac{1}{4n-3}(E_1+H_{13} +H_{24} +T_{22} + D^{'})$ is not log canonical at $P$. Write
$$D^{'} = a_3E_3+a_{13}H_{13}   +\Omega,$$
\noindent
where $ a_3, a_{13} \ge 0$ and $ E_3, H_{13} \nsubseteq \support(\Omega)$. We have
$$8n - 8 = D^{'}\cdot \varphi^*(t_2) \ge 2a_{13}.$$
\noindent
So $1+a_{13} \le 4n-3.$ This implies that $(X,H_{13} + \frac{1}{4n-3}(E_1+H_{24} +T_{22}+ \Omega))$ is not log canonical at $P$. By the inversion of adjunction formula, $(H_{13}, \frac{1}{4n-3}(E_1+H_{24}+T_{22}+\Omega)|_{H_{13}})$ is not log canonical at $P$. Thus 
$$1+(2n-2) + a_{13} - a_{3} \ge \textup{mult}_P(H_{13})\textup{mult}_P(E_1+H_{24} +T_{22}+ \Omega) >4n-3.$$
\noindent
On the other hand, one has 
$$0 \le \Omega\cdot E_{3} =(D^{'} - a_{13}H_{13} - a_3E_3)\cdot E_{3}= 2n-2 - a_{13} +a_3.$$
\noindent
The two above inequalities yield a contradiction that $$ 2n-2\ge a_{13}-a_3>2n-2.$$

Similarly we get a contradiction if $P\in H_{23}\setminus E_3$ or $P\in H_{34}\setminus E_3$.\\

\noindent 
{\bf Case 4:}  $P\in T_{33}\setminus E_3$
\\
Denote by $E$ the effective divisor $H_{12}+ H_{14}+ H_{24}+T_{11}+T_{22}$ on $X$.
Since $(X,\frac{1}{4n-3}D_{0})$ is not log canonical at $P$, we have $(X,\frac{1}{4n-3}(E+T_{33}+D'))$ is not log canonical at the point $P$. Write \[D'=a_{33}T_{33}+\Omega\]
where $a_{33}\geq 0$ and $\Omega$ is an effective $\mathbb{Q}$-Cartier divisor with $T_{33}\not\subset \support(\Omega)$. %Let $\hat{M}\in |2T_{22}|$ be a general member with $\hat{M}\not\subset \support(D')$. 
We have \[8n-8=D'\cdot \varphi^*(t_2)\geq a_{33}T_{33}\cdot \varphi^*(t_2)=2a_{33}\] which implies $4n-3\geq 1+a_{33}$.
The inversion of adjunction formula yields that the pair $(T_{33},\frac{1}{4n-3}(E+\Omega)|_{T_{33}})$ is not log canonical at $P$. 
Thus we obtain a contradiction that 
\[1+(4n-4)\geq ((E+\Omega)\cdot T_{33})_P\geq \textup{mult}_P((E+\Omega)|_{T_{33}})>4n-3.\]
\\

\noindent 
From Cases 1,2,3, and 4, we complete the proof of 1 of Theorem \ref{MainTheorem-3}.

    \subsection{The anti-invariant part.}
    In this part we prove that $\lct\left( X,D_{i}\right) \ge \frac{1}{4n}$ for any $D_i\in |(2n+1)K_X|_i$ when $i=1,2,3$. Without loss of generality, we may assume that $i=1$ by Lemma \ref{pluricanonical_system_bidouble_covers}. Then 
    $$D_{1} \sim (E_1 + H_{23} + H_{24} + T_{22}) + \varphi^{*}(3nl - (n+1)e_1 - (n-1)e_2 - ne_3 - ne_4).$$
Take \begin{align*}\overline{D}_1
    &=(E_1+H_{23}+H_{24}+T_{22})+
    2(2nH_{12}+nH_{34}+(n-1)E_1+(n+1)E_2)\\
    &=(2n-1)E_1+(2n+2)E_2+4nH_{12}+H_{23}+H_{24}+ 2n H_{34}+T_{22}\\
    &\in |(2n+1)K_X|_1=R_1+|\varphi^*(-(n-1)K_{Y}+L_1)|.
    \end{align*}
    \noindent
    It follows from Lemma \ref{comarelct} that  $\lct_P(X,\overline{D}_1) = \frac{1}{4n}$ where  $P\in H_{12}\cap H_{34}$.\\
    
    \noindent
    Suppose on the contrary that $(X,\frac{1}{4n}D_1)$ is not log canonical for some $D_1\in |(2n+1)K_X|_1$ at some point $P\in X$. % The basic idea of the proof is to obtain a contradiction. 
    We first show that the point $\varphi(P)$ belongs to the branch locus $B=B_1 + B_2 +B_3$. Let us denote by $d_1:= \varphi(D_1)$. According to \cite[Lemma 4.1]{MR4216579}, the pair $(Y, \frac{1}{4n}d_1 + \frac{1}{2}B)$ is not log canonical at $\varphi(P)$. Assume that $\varphi(P) \notin B$. Then $(Y, \frac{1}{4n}d_1)$ is not log canonical at $\varphi(P)$. So $\glct(Y, d_1) < \frac{1}{4n}$. We notice that $d_1 \sim_{\mathbb{Q}} \frac{1}{2}(2n+1)(-K_{Y})$ and $\glct(Y, -K_{Y}) \ge \frac{1}{2}$ by \cite[Theorem 1.7]{MR2465686}. Thus $\glct(Y, d_1) \ge \frac{1}{2n+1}$ which is a contradiction. Therefore $\varphi(P) \in B = B_1 + B_2 + B_3$.
\bigskip

    It remains to show that we can obtain a contradiction if $\varphi(P) \in B = B_1 + B_2 + B_3$. For this purpose we divide the proof into a sequence of following cases:\\

\iffalse  
%%%%%%%%%%%%%%%%%%%%%%%%%%%    
    \noindent
    {\bf Case 1:} $P\in (E_1 \cap H_{12}) \cup (E_1 \cap H_{13}) \cup (E_1 \cap H_{14})$.\\

    \noindent
    We give the proof only for the case $P\in E_1\cap H_{12}$. The similar proof works for the cases $P\in E_1\cap H_{13}$ or $P\in E_1\cap H_{14}$.\\
    
    \noindent
    Write
    $$D_1 = a_1E_1 + a_3E_3 + a_{12}H_{12} +a_{13}H_{13} + \Omega,$$
    \noindent
    where $a_1 \ge 1, a_3 , a_{12}, a_{13} \ge 0$ and $E_1,E_3,H_{12},H_{13} \nsubseteq \support(\Omega)$ with $\Omega \geq H_{23}+H_{24}+T_{22}$. We have
    $$8n+4 = D_1\cdot\varphi^{*}(t_1) \ge 2a_1 + (H_{23} + H_{24} + T_{22})\cdot\varphi^{*}(t_1) = 2a_1 + 6.$$
    \noindent
    So $a_1 \le 4n - 1$. This implies that $(X,E_1+ \frac{1}{4n}(a_{12}H_{12} +\Omega)) $ is not log canonical at $P$. Therefore 
    $$\mult_P(E_1)\mult_P(a_{12}H_{12} +\Omega) > 4n.$$
    \noindent
    We notice that
    $$E_1\cdot(a_{12}H_{12} +\Omega)=E_1\cdot(D_1 - a_1E_1 -a_3E_3 - a_{13}H_{13}) = 2n+1+a_1-a_{13}.$$
    \noindent
    Hence $$2n < 1 + a_1 - a_{13}.$$
    %$$2n+1+a_1-a_{13} > 4n \Longrightarrow 2n < 1 + a_1 - a_{13}.$$
    We remark that $H_{13}\cdot\Omega \ge H_{13}\cdot (H_{24} + T_{22}) = 2$ and
    $$H_{13}\cdot\Omega = H_{13}\cdot(D_1 - a_1E_1 -a_3E_3 -a_{12}H_{12}- a_{13}H_{13}) = 2n+1 -a_1 -a_3 +a_{13}.$$
    \noindent
    Thus $$2n \ge a_1 + a_3 -a_{13} + 1.$$
    %$$2n+1 -a_1 -a_3 +a_{13} \ge 2 \Longrightarrow 2n \ge a_1 + a_3 -a_{13} + 1.$$
    \noindent
    These above two inequalities imply that $a_3 < 0$ which is a contradiction.\\
    
%%%%%%%%%%%%%%%%%%%%%%%%%%%
\fi

    \noindent
    {\bf Case 1:} $P\in E_1 \cap H_{13}$.\\ 
    Write
    $$D_1 = a_1E_1  +a_{13}H_{13} + a_{14}H_{14}+ \Omega,$$
    \noindent
    where $a_1 \ge 1, a_{13}, a_{14} \ge 0$ and $E_1,H_{13},H_{14} \nsubseteq \support(\Omega)$ with $\Omega \geq H_{23}+H_{24}+T_{22}$. We have
    $$8n+4 = D_1\cdot\varphi^{*}(t_1) \ge 2a_1 + (H_{23} + H_{24} + T_{22})\cdot\varphi^{*}(t_1) = 2a_1 + 6.$$
    \noindent
    So $a_1 \le 4n - 1$. This implies that $(X,E_1+ \frac{1}{4n}(a_{13}H_{13} +\Omega))$ is not log canonical at $P$. Therefore 
    $$\mult_P(E_1)\mult_P(a_{13}H_{13} +\Omega) > 4n.$$
    \noindent
    We notice that
    $$E_1\cdot(a_{13}H_{13} +\Omega)=E_1\cdot(D_1 - a_1E_1  - a_{14}H_{14}) = 2n+1+a_1-a_{14}.$$
    \noindent
    Hence $$1 + a_1 - a_{14}>2n.$$
    %$$2n+1+a_1-a_{13} > 4n \Longrightarrow 2n < 1 + a_1 - a_{13}.$$
    We remark that $H_{14}\cdot\Omega \ge H_{14}\cdot (H_{23} + H_{24}+T_{22}) = 2$ and
    $$H_{14}\cdot\Omega = H_{14}\cdot(D_1 - a_1E_1  - a_{13}H_{13}-a_{14}H_{14}) = 2n+1 -a_1 +a_{14}.$$
    \noindent
    Thus $$2n \ge 1+a_1 -a_{14}.$$
    %$$2n+1 -a_1 -a_3 +a_{13} \ge 2 \Longrightarrow 2n \ge a_1 + a_3 -a_{13} + 1.$$
    \noindent
    This leads to a contradiction that $2n\ge 1+a_1-a_{14}>2n$.\\

    \noindent
    {\bf {Case 2:} $P \in E_1\setminus  H_{13} $}.\\
    \noindent
    Write
    $$D_1 = E_1 + H_{23} + H_{24} + T_{22} + D'_1,$$
    \noindent
    where $D'_1 \in \left| \varphi^{*}(3nl - (n+1)e_1 - (n-1)e_2 - ne_3 - ne_4)\right|$. We express 
    $$D'_1 = a_1E_1 + a_{13}H_{13} + \Omega,$$
    \noindent
    where $a_1, a_{13} \ge 0$ and $E_1,H_{13}\nsubseteq \support(\Omega)$. Then
    $$8n - 4 = D'_1\cdot\varphi^{*}(t_1) \ge 2a_1.$$
    \noindent
    This implies that $1+a_1  \le 4n-1$. Since $(X, \frac{1}{4n}D_1) $ is not log canonical at $P$, $(X,E_1+ \frac{1}{4n}\Omega) $ is not log canonical at $P$. Therefore 
    $$\mult_P(E_1)\mult_P(\Omega) > 4n.$$
    \noindent
    We notice
    $$E_1\cdot\Omega = E_1\cdot(D'_1 - a_1E_1 - a_{13}H_{13} ) = 2n+2 +a_1 -a_{13}.$$
    \noindent
    From the above it follows that $2n+2 +a_1 -a_{13} > 4n$. On the other hand,
    $$0 \le H_{13}\cdot\Omega = H_{13}\cdot(D'_1 - a_1E_1 - a_{13}H_{13} ) = 2n-2 -a_1 +a_{13}.$$
    \noindent
    The two above inequalities lead to the fact that $2n- 2 < a_1 -a_{13} \le 2n-2$ which is a contradiction.\\
    %\noindent
    %We can combine \bf{Case 1'} and \bf{Case 2'} to \bf{Case $E_1$: $P\in E_1$.}\\

    \noindent
    {\bf Case 3:} $P\in H_{14} \setminus E_1$.\\
    Write 
    $$D_1 = E_1+H_{23}+H_{24}+T_{22} + D'_1,$$

    \noindent
    where $D'_1 \in \left| \varphi^{*}(3nl - (n+1)e_1 - (n-1)e_2 - ne_3 - ne_4)\right|$. Since $(X, \frac{1}{4n}D_1)$ is not log canonical at $P$, $(X, \frac{1}{4n}(H_{23} +T_{22}+D'_1))$ is not log canonical at $P$. Express 
    $$D'_1 = a_{14}H_{14} + a_1E_1 + a_4\Tilde{E}_4 + \Omega,$$
    \noindent
    where $a_{14}, a_1, a_4 \ge 0$ and $H_{14},E_1,\Tilde{E}_4 \nsubseteq \support(\Omega)$. We have
    $$8n = D'_1\cdot\varphi^*(t_3) \ge 2a_{14}.$$
    \noindent
    So $a_{14}\le 4n$. \\

    \noindent
    Assume that  $P\notin H_{14}\cap \Tilde{E}_4$. Then $(X, H_{14} + \frac{1}{4n}(H_{23} + T_{22} + \Omega))$ is not log canonical at $P$. %We obtain that$$\mult_P(H_{14})\mult_P(H_{23} +T_{22}  + \Omega) > 4n.$$
    \noindent
    Thus
    \begin{align*}
        2 + 2n-2 +a_{14} -a_1 - 2a_4 &\ge \mult_P(H_{14})\mult_P(H_{23} +T_{22}  + D'_1 - a_{14}H_{14} - a_1E_1 - a_4\Tilde{E}_4)\\
        &=\mult_P(H_{14})\mult_P(H_{23} +T_{22}  + \Omega)\\
        &> 4n.
    \end{align*}

    \noindent
    Therefore one sees $a_{14} > 2n +a_1 +2a_4$.\\
    
    \noindent
    On the other hand, we have 
    $$ 0 \le \Tilde{E}_4\cdot\Omega = \Tilde{E}_4\cdot(D'_1 - a_{14}H_{14} - a_1E_1 - a_4\Tilde{E}_4)  =  4n - 2a_{14} + 4a_4. $$
    \noindent
    Thus $ a_{14} \le 2n+2a_4$. This leads to the fact that $ a_1 < 0$ which is a contradiction.\\

    \noindent
    Assume that $P\in H_{14}\cap \Tilde{E}_4$. Recall that $(X, \frac{1}{4n}(H_{23} +T_{22}+D'_1))$ is not log canonical at $P$. As $P\in H_{14}\cap \Tilde{E}_4$ and $a_{14}\leq 4n$, one has that $(X, H_{14} + \frac{1}{4n}( a_4\Tilde{E}_4 + \Omega))$ is not log canonical at $P$. So
    \begin{align*}
         2n-2 +a_{14} -a_1 &\ge \mult_P(H_{14})\mult_P(D'_1 - a_{14}H_{14} - a_1E_1 )\\
        &=\mult_P(H_{14})\mult_P(a_4\Tilde{E}_4 + \Omega)\\
        &> 4n.
    \end{align*}
    \noindent
    Therefore $a_{14} > 2n +a_1 +2$.\\
    
    \noindent
    On the other hand, one has 
    $$ 0 \le E_1\cdot\Omega = E_1\cdot(D'_1 - a_{14}H_{14} - a_1E_1 - a_4\Tilde{E}_4)  =  2n+2 - a_{14} + a_1. $$
    \noindent
    Thus $ a_{14} \le 2n+2+a_1$. This leads to the fact that $ 2n+2+a_1 < 2n+2+a_1$ which is a contradiction.\\

    \noindent
    {\bf Case 4:} $P\in E_2 \cap H_{12}$.\\    
    Write
    $$D_1 =  a_1E_1+a_2E_2+a_{12}H_{12}+a_{34}H_{34}+\Omega,$$
    \noindent
    where $a_2, a_{12}, a_{34} \ge 0, a_1\geq 1$ and $E_1, E_2,H_{12},H_{34} \nsubseteq \support(\Omega)$ with  $\Omega\geq H_{23}+H_{24}+T_{22}$. %{\color{violet} ''May be $\Omega\geq H_{23}+H_{24}+T_{22}$''}. 
    We have that
    $$8n+4 = D_1\cdot\varphi^{*}(t_3) \ge 2a_{12} + (H_{23} + H_{24} + T_{22})\cdot\varphi^{*}(t_3) = 2a_{12} + 4.$$ 

    \noindent
    So $a_{12} \le 4n$. This implies that $(X,H_{12}+ \frac{1}{4n}(a_{2}E_2 +\Omega)) $ is not log canonical at $P$. Therefore 
    $$\mult_P(H_{12})\mult_P(a_{2}E_2 +\Omega) > 4n.$$
    \noindent
    We notice that
    $$H_{12}\cdot(a_{2}E_2 +\Omega)=H_{12}\cdot(D_1 -a_1E_1-a_{12}H_{12} - a_{34}H_{34}) = 2n+1-a_1+a_{12}-a_{34}.$$
    \noindent
    As $a_1\ge 1$, we obtain that $a_{12}-a_{34}>a_1+4n-(2n+1)\geq 2n$.
    
    On the other hand, $H_{34}\cdot\Omega \ge H_{34}\cdot (H_{23}+H_{24}+T_{22}) = 1$ and 
    $$2n+1=D_1\cdot H_{34}=a_{12}-a_{34}+\Omega\cdot H_{34}\geq a_{12}-a_{34}+1.$$
    \noindent
    Thus 
    $$2n+1 \geq a_{12}-a_{34}+1>2n+1.$$
    \noindent
    It is a contradiction.
    \\

    \noindent
    {\bf {Case 5:} $P \in E_2\setminus  H_{12} $}.\\
    Write
    $$D_1 = E_1 + H_{23} + H_{24} + T_{22} + D'_1,$$
    \noindent
    where $D'_1 \in \left| \varphi^{*}(3nl - (n+1)e_1 - (n-1)e_2 - ne_3 - ne_4)\right|$. We express 
    $$D'_1 = a_2E_2 + a_{12}H_{12} + a_{34}H_{34} + \Omega,$$
    \noindent
    where $a_2, a_{12}, a_{34} \ge 0$ and $E_2,H_{12},H_{34}\nsubseteq \support(\Omega)$. We have
\begin{align*}
        2n &= D'\cdot H_{12} \ge a_2 - a_{12} + a_{34} \textup{ and }\\
        2n &= D'\cdot H_{34} \ge a_{12} -a_{34}
    \end{align*}
    which yield $$a_2\leq 4n.$$
\noindent
\noindent
    So $(X,E_2 + \frac{1}{4n}(H_{23} + H_{24} +T_{22} +\Omega))$ is not log canonical at the point $P$. The inversion of adjunction formula implies that the pair $(E_2,\frac{1}{4n}(H_{23} + H_{24} +T_{22} +\Omega)|_{E_2})$ is not log canonical at $P$. Thus  $$1+(2n -2 +a_2 -a_{12}) \ge \mult_P(E_2)\mult_P(H_{23} + H_{24} +T_{22} +\Omega) > 4n $$
    \noindent
    This inequality together with the above one 
    $$2n \ge a_2 - a_{12} + a_{34} \geq a_2 - a_{12}$$
    \noindent
    implies that $2n  \ge a_2 - a_{12}  > 2n +1.$ It is a contradiction.\\
    
    \noindent
    {\bf Case 6:} $P\in (H_{12} \setminus (E_1 \cup E_2)) \cup (H_{13} \setminus (E_1 \cup E_3))$.\\
    Suppose that $P \in H_{12} \setminus (E_1 \cup E_2)$. Write 
    $$D_1 = E_1+H_{23}+H_{24}+T_{22} + D'_1,$$

    \noindent
    where $D'_1 \in \left| \varphi^{*}(3nl - (n+1)e_1 - (n-1)e_2 - ne_3 - ne_4)\right|$. Since $(X, \frac{1}{4n}D_1)$ is not log canonical at $P$, $(X, \frac{1}{4n}D'_1)$ is not log canonical at $P$. Express 
    $$D'_1 = a_{12}H_{12} + a_1E_1 + a_2E_2 + \Omega,$$
    \noindent
    where $a_{12}, a_1, a_2 \ge 0$ and $H_{12},E_1,E_2 \nsubseteq \support(\Omega)$. We have
    $$8n= D'_1\cdot\varphi^*(t_3) \ge 2a_{12}.$$
    \noindent
    So $a_{12}\le 4n$. This implies that $(X, H_{12} + \frac{1}{4n}\Omega)$ is not log canonical at $P$. %So we obtain that$$\mult_P(H_{12})\mult_P(\Omega) > 4n.$$
    Thus
    \begin{align*}
        2n +a_{12} -a_1 - a_2 &\ge \mult_P(H_{12})\mult_P( D'_1 - a_{12}H_{12} - a_1E_1 - a_2E_2)\\
        &=\mult_P(H_{12})\mult_P( \Omega)\\
        &> 4n.
    \end{align*}
    \noindent
    Therefore one sees $a_{12} > 2n+a_1 +a_2$.\\

    \noindent
    On the other hand, we have 
    $$0 \le E_2\cdot\Omega = E_2\cdot(D'_1 - a_{12}H_{12} - a_1E_1 - a_2E_2) = 2n - 2 - a_{12} + a_2.$$
    \noindent
    Thus $a_{12} \le 2n -2+a_2$. This leads to the fact that $a_1 < -2$ which is a contradiction.\\

    Similar argument works in the case $P \in H_{13} \setminus (E_1 \cup E_3)$.\\

 %\noindent
    %{\bf Case 7:} $P\in (E_3 \cap H_{13}) \cup (E_3 \cap H_{23}) \cup (E_3 \cap H_{34})$.\\
    \noindent
    %We get a contradiction analogously to Case 6.\\

    \noindent
    {\bf Case 7:} $P\in E_3 \cap H_{13} $.\\
 Write
    $$D_1 =  a_1E_1+a_3E_3+a_{13}H_{13}+a_{24}H_{24}+a_{22}T_{22}+\Omega,$$
    \noindent
    where $a_3, a_{13} \ge 0, a_{1},a_{24},a_{22}\geq 1$ and $E_1, E_3, H_{13},H_{24},T_{22} \nsubseteq \support(\Omega)$ with  $\Omega\geq H_{23}.$ %{\color{violet} ''May be $\Omega\geq H_{23}+H_{24}+T_{22}$''}. 
    We have that
    $$8n+4 = D_1\cdot\varphi^{*}(t_4) \ge 2a_{13} + 2a_{22}+H_{23}\cdot\varphi^{*}(t_4) = 2a_{13} + 4.$$ 

    \noindent So $a_{13} \le 4n$. This implies that $(X,H_{13}+ \frac{1}{4n}(a_{3}E_3 +\Omega)) $ is not log canonical at $P$. Therefore 
    $$\mult_P(H_{13})\mult_P(a_{3}E_3 +\Omega) > 4n.$$
    \noindent
    We notice that
    $$H_{13}\cdot(a_{3}E_3 +\Omega)=H_{13}\cdot(D_1 -a_{1}E_1- a_{13}H_{13}-a_{24}H_{24}-a_{22}T_{22}) = 2n+1-a_{1}+a_{13}-a_{24}-a_{22}.$$
    \noindent
    As $a_{22}, a_{24}\ge 1$, one has $a_{13}-a_{1}> 2n-1+a_{22}+a_{24}\ge 2n+1$.
    
    On the other hand, one has 
    $$2n+1=D_1\cdot E_{1}=- a_1+a_{13}+\Omega\cdot E_{1}\geq a_{13}-a_1$$ since $E_1\nsubseteq \support(\Omega)$.
    \noindent
    Thus  
    $$2n+1 \geq a_{13}-a_{1}>2n+1.$$
    \noindent
    It is a contradiction.\\

    \noindent
    {\bf {Case 8:} $P \in E_3\setminus  H_{13} $}.\\
    \noindent We get a contradiction similarly to Case 5.\\
    
\iffalse
%%%%%%%%%%%%%%%%%%%%%%%%%%
    Write
    $$D_1 = E_1 + H_{23} + H_{24} + T_{22} + D'_1,$$
    \noindent
    where $D'_1 \sim \varphi^{*}(3nl - (n+1)e_1 - (n-1)e_2 - ne_3 - ne_4)$. We write
    $$D'_1 = a_3E_3 + a_{13}H_{13} + a_{24}H_{24} + \Omega,$$
    \noindent
    where $a_3, a_{13}, a_{24} \ge 0$ and $E_3,H_{13},H_{24}\nsubseteq \support(\Omega)$. We have
\begin{align*}
        2n-2 &= D'\cdot H_{13} \ge a_3 - a_{13} + a_{24} \textup{ and }\\
        2n +2&= D'\cdot H_{24} \ge a_{13} -a_{24}
    \end{align*}
    which yield $$a_3\leq 4n.$$
\noindent
\noindent
    So $(X,E_3 + \frac{1}{4n}(H_{23} +\Omega))$ is not log canonical at the point $P$. The inversion of adjunction formula implies that the pair $(E_3,\frac{1}{4n}(H_{23} +\Omega)|_{E_3})$ is not log canonical at $P$. Thus  $$1+(2n +a_3 -a_{13}) \ge \mult_P(E_3)\mult_P(H_{23}  +\Omega) > 4n $$
    \noindent
    This inequality together with the above one 
    $$2n-2 \ge a_3 - a_{13} + a_{24} \geq a_3 - a_{13}$$
    \noindent
    implies that $2n -2 \ge a_3 - a_{13}  > 2n -1$ which is a contradiction.\\

    %\noindent
    %\bf{Case 7'} + \bf{Case 8'} = \bf{Case $E_3$: $P \in E_3$.\\ }
%%%%%%%%%%%%%%%%%%%%%%%%%%
\fi

    \noindent
    {\bf Case 9:} $P\in (H_{23} \setminus (E_2 \cup E_3)) \cup (H_{24} \setminus E_2 )$.\\
    \noindent
    Similarly to Case 6, we get a contradiction if $P\in (H_{23} \setminus (E_2 \cup E_3))$. Moreover we obtain a contradiction if $P\in H_{24} \setminus E_2 $ similarly to Case 3.\\

    \noindent
    {\bf Case 10:} $P\in H_{34} \setminus (E_3\cup T_{22})$.\\
    We get a contradiction analogous to Case 3.\\
    
   \noindent
   {\bf Case 11: }$P\in H_{34}\cap T_{22}$.\\ 
   \noindent
    Write 
    $$D_1 = a_1E_1+a_{12}H_{12}+a_{34}H_{34}+ a_{22}T_{22}+ \Omega,$$
    where $a_{12},a_{34}\ge 0$, $ a_1, a_{22}\ge 1$ and $E_1,H_{12},H_{34},T_{22}\nsubseteq \support(\Omega)$. Note $\Omega\ge H_{23}+H_{24}$. 
    We have $$8n+4 = D_1\cdot\varphi^*(t_1) \ge 2a_1+2a_{34}+2a_{22}+(H_{23}+H_{24})\cdot \varphi^*(t_1)\geq 2(a_{34}+a_{22})+6.$$ In particular, $a_{34}+a_{22}\leq 4n-1$. As $P\notin E_1\cup H_{12}$, one has that $(X, H_{34} + \frac{1}{4n}(a_{22}T_{22}+ \Omega))$ is not log canonical at $P$. The inversion of adjunction formula gives 
    \begin{align}
         2n+1+a_{34}-a_{12} &\ge \mult_P(H_{34})\mult_P(  D_1 -a_{12}H_{12}- a_{34}H_{34}  ) \nonumber\\
        &=\mult_P(H_{34})\mult_P(a_{22}T_{22}+\Omega)> 4n. \label{positive}
    \end{align}
    \noindent
    Therefore $a_{34} > 2n-1+a_{12}\ge 2n-1$. %Together with $a_{22}+a_{34}\leq 4n-1$ and $a_{12}\ge 0$, one obtains $a_{22}<2n$.

    %Similarly, $(X, T_{22} + \frac{1}{4n}(a_{34}H_{34}+ \Omega))$ is not log canonical at $p$. So we obtain that
    %\begin{align*}
    %     4n+2 &\ge %\mult_P(T_{22})\mult_P( D_1 - %a_{12}H_{12} - a_{22}T_{22}  )\\
    %&=\mult_P(T_{22})\mult_P(a_{34}H_{34}+\Omega)> 4n. 
    %\end{align*}

    Define $B':=a_{22}T_{22}+a_{34}H_{34} $ and $C:=\Omega$. Put $m:=\mult_P(B')=a_{22}+a_{34}(\ge 1)$ and the local intersection number $I:=(B'\cdot C)_P$ at $P$. Then $I$ is positive because $\mult_P(\Omega)>4n-a_{22}\ge1$ by (\ref{positive}) and $a_{22}\le a_{22}+a_{34}\leq 4n-1$. Note that 
   \begin{align*}
   I&=((a_{22}T_{22}+a_{34}H_{34})\cdot \Omega)_P\le \Omega\cdot (a_{22}T_{22}+a_{34}H_{34})\\
   &= (D_1-a_1E_1-a_{12}H_{12}-a_{34}H_{34}-a_{22}T_{22})\cdot (a_{22}T_{22}+a_{34}H_{34})\\
   &=(4n+2)a_{22}+(2n+1)a_{34}-a_{12}a_{34}-2a_{22}a_{34}+a_{34}^2 \\ &=(4n+2)a_{22}+(2n+1-a_{12}-2a_{22}+a_{34}) a_{34}.  
   \end{align*} 
    %Since $a_{34}<a_{22}+a_{34}\leq 4n-1$ and  
    %$$ 2n+1=D_1\cdot H_{34} \geq a_{12}-a_{34}+a_{22},$$
        %we have $$(4n+2)a_{22}+(2n+1-a_{12}-2a_{22}+a_{34}) a_{34}>0.$$ 
        From \cite[Lemma 3.2]{MR4597209}, the pair $(X,\frac{1}{m}B'+\frac{m}{I}C)$ is log canonical near $P$. 
   Since $m=a_{22}+a_{34}\le 4n-1<4n$, one has $(X,\frac{1}{4n}B'+\frac{m}{I}C)$ is log canonical near $P$. 
   On the other hand,  $(X,\frac{1}{4n}D_1)$ is not log canonical at $P\in T_{22}\cap H_{34}$. So $(X,\frac{1}{4n}B'+\frac{1}{4n}C)=(X,\frac{1}{4n}(a_{22}T_{22}+a_{34}H_{34})+\frac{1}{4n}\Omega)$ is not log canonical at $P$. This implies $$\frac{a_{22}+a_{34}}{(4n+2)a_{22}+(2n+1-a_{12}-2a_{22}+a_{34}) a_{34}}\leq \frac{m}{I}<\frac{1}{4n}. $$ 
     One has 
    \begin{equation}\label{dagger}
      2a_{22}> (2n-1+a_{12}+2a_{22}-a_{34}) a_{34}.
    \end{equation}
     Note that $$2n+1=D_1\cdot H_{12}\geq a_1-a_{12}+a_{34}\geq 1-a_{12}+a_{34}.$$ 
     Then (\ref{dagger}) gives $2a_{22}\geq (2a_{22}-1)a_{34}$. 
     Recall that $a_{34}>2n-1$ and $a_{22}\ge 1$. We have $$ 2a_{22}> (2a_{22}-1)a_{34}\geq (2a_{22}-1) (2n-1),$$ which implies $n=1$. Now the inequality in (\ref{dagger}) is 
     $$2a_{22}> (1+a_{12}+2a_{22}-a_{34}) a_{34.}$$
     Since $a_{12}\ge 0$, we have %$$a_{34}  +2a_{22}a_{34}<2a_{22} + a_{34}^2. $$
    %Equivalently, 
    $$2a_{22}(a_{34}-1) < a_{34}(a_{34} - 1).$$
    Note that $$1=2n-1<a_{34}=a_{22}+a_{34}-a_{22}\leq 4n-1-a_{22}=3-a_{22}\leq 2.$$
    We obtain 
    $$2a_{22} < a_{34}\cdot\frac{a_{34} -1} {a_{34}-1}= a_{34} \leq 2,$$
     which implies $a_{22} < 1$. This contradicts to $a_{22}\ge 1$.\\

    \noindent
    {\bf Case 12:} $P\in T_{11}  $.\\
    \noindent
    If $P\in E_1\cap T_{11}$ then we get a contradiction by Case 2. Thus we may assume that $P\in T_{11}\setminus E_1$.
    
    Consider the decomposition $ T_{11}\setminus E_1 = (T_{11}\setminus (E_1 \cup H_{23} )) \bigcup (T_{11} \cap H_{23}
    )$. By Case 9, we get a contradiction if $P \in H_{23} \setminus (E_2 \cup E_3)$. In particular if $P \in T_{11} \cap H_{23}$ then we obtain a contradiction.\\

    Thus one may assume that $P\in T_{11}\setminus (E_1 \cup H_{23} )$. Write
    $$D_1 = a_{11}T_{11} + a_1E_1 + a_{23}H_{23} +\Omega,$$
    \noindent
    where $a_{1}, a_{23} \ge 1, a_{11} \ge 0$ and $T_{11},E_1,H_{23} \nsubseteq \support(\Omega)$. We have
    $$8n+4 = D_1\cdot\varphi^{*}(t_3) \ge 2a_{11}+ (H_{24} +  T_{22})\cdot\varphi^{*}(t_3) \ge 2a_{11}+4.$$
    \noindent
    So $a_{11} \le 4n$. This implies that $(X,T_{11}+ \frac{1}{4n}\Omega) $ is not log canonical at $P$. Therefore 
    $$\mult_P(T_{11})\mult_P(\Omega) > 4n.$$
    \noindent
    Since $T_{11}\cdot\Omega \ge \mult_P(T_{11})\mult_P(\Omega)$ where 
    %\begin{align*}
      $$  T_{11}\cdot\Omega = T_{11}\cdot(D_1 - a_{11}T_{11} - a_1E_1 - a_{23}H_{23} )
                     = 4n + 2 -a_1 - a_{23}, $$
    %\end{align*}
    \noindent
    we get
    $$2 > a_1 + a_{23}  \ge 1 + 1$$
    \noindent
    which is a contradiction.\\
    
    %\noindent
    %We have proved that we get a contradiction if $p \in E_1 \bigcup (H_{12} \cup H_{13} \cup H_{14}) \bigcup T_{11}$.
    \noindent
    {\bf Case 13: $P \in T_{33}\setminus  T_{22}$}.\\
    \noindent
    By the previous cases: 3,6,8,9 we may assume that $P \notin E_3\cup H_{12}\cup H_{14}\cup H_{24}$.
    \medskip
    
    Therefore we may assume that $P \in T_{33}\setminus  (E_3\cup H_{12}\cup H_{14}\cup H_{24}\cup T_{22})$. Write 
    $$D_1 = E_1+H_{23}+H_{24}+T_{22} + D'_1,$$

    \noindent
    where $D'_1 \in \left|\varphi^{*}(3nl-(n+1)e_1-(n-1)e_2-ne_3-ne_4)\right|.$ Since $(X, \frac{1}{4n}D_1)$ is not log canonical at $P$ where $P\notin H_{24}\cup T_{22}$, $(X, \frac{1}{4n}D'_1)$ is not log canonical at $P$. Express
    $$D'_1 = a_{33}T_{33}+\Omega,$$
    \noindent
    where $a_{33} \ge 0$ and $T_{33} \nsubseteq \support(\Omega)$. We have
    $$8n-4= D'_1.\varphi^*(t_1) \ge 2a_{33}.$$
    \noindent
    So $a_{33}\le 4n-2$. This implies that $(X, T_{33} + \frac{1}{4n}\Omega)$ is not log canonical at $P$. In particular, 
    $$\mult_P(T_{33})\mult_P(\Omega) > 4n.$$
    \noindent
    We notice
    $$T_{33}\cdot\Omega=T_{33}\cdot (D'_1 - a_{33}T_{33}) = 4n.$$
    \noindent
    Hence we obtain a contradiction.\\
    
    \noindent
{\bf {Case 14: $P \in T_{22}\cap  T_{33}$}}.\\
    \noindent
    Write
    $$D_1 = a_1E_1 +a_{14}H_{14} +a_{23}H_{23}+a_{24}H_{24}+a_{22}T_{22}+a_{33}T_{33} + \Omega,$$
    \noindent
    where $a_1, a_{22},a_{23},a_{24}\ge 1$, $a_{14}, a_{33} \ge 0$ and $E_1,H_{14}, H_{23},H_{24}, T_{22}, T_{33} \nsubseteq \support(\Omega)$. We have
    $$8n+4 = D_1\cdot\varphi^{*}(t_1) \ge 2a_{22}+2a_{33}+ (H_{23} +  H_{24})\cdot\varphi^{*}(t_1) \ge 2a_{22}+2a_{33}+4.$$
    \noindent
    So $a_{33}\leq a_{22}+a_{33} \le 4n$.  
    This implies that $(X,T_{33}+ \frac{1}{4n}(a_{22}T_{22}+\Omega)) $ is not log canonical at $P$. The inversion of adjunction formula yields  
    $$T_{33}\cdot (a_{22}T_{22}+\Omega)\ge (T_{33}\cdot (a_{22}T_{22}+\Omega))_P\ge \mult_P(T_{33})\mult_P(a_{22}T_{22}+\Omega) > 4n.$$
    \noindent
    Since $1\le a_{24}$ and 
    \begin{align*} T_{33}\cdot(a_{22}T_{22}+\Omega)&=T_{33}\cdot (D_1- a_1E_1 -a_{14}H_{14}-a_{23}H_{23}-a_{24}H_{24}-a_{33}T_{33})\\
    &=4n+2-(a_{14}+a_{24}),
    \end{align*} one has $a_{14}<1$ and $\mult_P(a_{22}T_{22}+\Omega)\le 4n+1$. As $1\le a_{22}$, it follows that $\mult_P(\Omega)=\mult_P(a_{22}T_{22}+\Omega)-a_{22}\le 4n$. 

    Since $a_{14}<1\leq a_{23}$, $1\leq a_1$ and 
    \begin{align*} 2n+1=H_{14}\cdot D_1\geq a_1-a_{14}+a_{23}+a_{22}+a_{33},
    \end{align*}
    one has $a_{22}+a_{33}<2n$. Note that both $(X,T_{22}+ \frac{1}{4n}(a_{33}T_{33}+\Omega))$ and $(X,T_{33}+ \frac{1}{4n}(a_{22}T_{22}+\Omega))$ are not log canonical at $P$. Then  the inversion of adjunction formula gives
   \begin{align*}  &2n+2\mult_P(\Omega)>a_{22}+a_{33}+2\mult_P(\Omega)\\ &=\mult_P(T_{22})\mult_P(a_{33}T_{33}+\Omega)+ \mult_P(T_{33})\mult_P(a_{22}T_{22}+\Omega)\\
   &> 4n+4n=8n   
   \end{align*}
  
   Define $B':=\Omega$ and $C:=a_{22}T_{22}+a_{33}T_{33}$. Put $m:=\mult_P(B')$ and the local intersection number $I:=(B'\cdot C)_P$ at $P$ which are positive by $\mult_P(\Omega)>3n$. From \cite[Lemma 3.2]{MR4597209}, the pair $(X,\frac{1}{m}B'+\frac{m}{I}C)$ is log canonical near $P$. 
   Since $3n< m=\mult_P(\Omega)\le 4n$ and 
   \begin{align*}
   I&=(\Omega\cdot (a_{22}T_{22}+a_{33}T_{33}))_P\le \Omega\cdot (a_{22}T_{22}+a_{33}T_{33})\\
   &\leq D_1\cdot (a_{22}T_{22}+a_{33}T_{33})=(4n+2)(a_{22}+a_{33}),
   \end{align*}
   we have that $(X,\frac{1}{4n}B'+\frac{3n}{(4n+2)(a_{22}+a_{33})}C)$ is log canonical near $P$.
   On the other hand,  $(X,\frac{1}{4n}D_1)$ is not log canonical at $P\in T_{22}\cap T_{33}$. So $(X,\frac{1}{4n}B'+\frac{1}{4n}C)=(X,\frac{1}{4n}\Omega+\frac{1}{4n}(a_{22}T_{22}+a_{33}T_{33}))$ is not log canonical at $P$. This implies $$\frac{3n}{(4n+2)(a_{22}+a_{33})}\leq \frac{m}{I}<\frac{1}{4n}.$$
   Therefore
   $$12n^2<(4n+2)(a_{22}+a_{33})<2n(4n+2)=8n^2+4n,$$ and so $n<1$. It contradicts to the assumption that $n\ge 1$.\\

    \noindent
    {\bf {Case 15:} $P \in T_{22}\setminus (H_{34} \cup T_{33})$}.\\
    \noindent
    By the previous cases, we can assume $P \in T_{22}\setminus (E_2 \cup H_{13} \cup H_{14} \cup H_{34}\cup T_{11}\cup T_{33})$.\\
    
    Let $d_1:=\varphi(D_1).$ 
    Write 
    $$d_1 = a_{22}'t_{22} + a_{23}'h_{23} +a_{24}'h_{24} + \Omega',$$ where $a_{22}',a_{23}',a_{24}'\ge 0$ and $\Omega'$ is an effective divisor on $Y$ with $t_{22},h_{23},h_{24}\nsubseteq \support(\Omega')$. Since $\varphi^*(d_1)=\varphi^*(\varphi(D_1))\geq D_1$ and $D_1\geq E_1+H_{23}+H_{24}+T_{22}$ where $\varphi^*(h_{23})=2H_{23}, \varphi^*(h_{24})=2H_{24}$ and $\varphi^*(t_{22})=2T_{22}$, one has $\min\{a_{23}', a_{24}'\}\geq \frac{1}{2}$.  Thus $$d_1\cdot t_1=(a_{22}'t_{22} + a_{23}'h_{23} +a_{24}'h_{24} + \Omega')\cdot t_1\ge a_{22}'+a_{23}'+a_{24}'\ge a_{22}'+1.$$ As   $d_1\sim_{\mathbb{Q}} -\frac{2n+1}{2}K_{Y}$, we see 
    $$d_1\cdot t_1=\frac{2n+1}{2}(3l-e_1-e_2-e_3-e_4)\cdot t_1=2n+1$$ which yields $a_{22}'\leq 2n$.
    
    Suppose now that $(X,\frac{1}{4n}D_1)$ is not log canonical at $P$. Then  $(Y, \frac{1}{4n}d_1 + \frac{1}{2}B)$ is not log canonical at $\varphi(P)$ by \cite[Lemma 4.1]{MR4216579}. The assumption $P \in T_{22}\setminus (E_2 \cup H_{13} \cup H_{14} \cup H_{34}\cup T_{11}\cup T_{33})$ implies that $(Y, \frac{1}{4n}d_1 + \frac{1}{2}t_{22})$ is not log canonical at $\varphi(P)$.  
    Since $a_{22}' \le 2n$ and $\varphi(P)\not\in h_{23}\cup h_{24}$,
    one sees that $(Y, t_{22}+\frac{1}{4n}\Omega')$ is not log canonical at $\varphi(P)$. So 
    $$\mult_{\varphi(P)}(t_{22})\mult_{\varphi(P)}(\Omega') > 4n.$$
    \noindent
    We notice that $d_1\cdot t_{22}= -\frac{2n+1}{2}K_{Y}\cdot t_2=2n+1$. 
    Then we get a contradiction that 
    \[
    2n+1=t_{22}\cdot d_1= t_{22}\cdot \Omega'\geq \mult_{\varphi(P)}(t_{22})\mult_{\varphi(P)}(\Omega') > 4n.
    \]

 \begin{Acknowledgments}
The authors are grateful to Professor Jungkai Alfred Chen for his valuable feedback on this article. The first author is supported by Vietnam Ministry of Education and Training. The second author was partially support by National Science and Technology Council of Taiwan (Grant Numbers: 110- 2115-M-008-006-MY2 and 111-2123-M-002-012-). The third author was supported by the National Research Foundation of Korea(NRF) grant funded by the Korea government(MSIT)(No. 2021R1A4A3033098) and by Basic Science Research Program through the National Research Foundation of Korea(NRF) funded by the Ministry of Education(No. RS-2023-00241086). Part of this work was done in NCTS and Quy Nhon university. We would like to thank for NCTS and Quy Nhon university for its hospitality.
\end{Acknowledgments}

   % \bibliographystyle{acm}
    %\bibliography{JhengJie-Bin-YongJoo}

\begin{thebibliography}{10}

\bibitem{MR2609250}
{\sc Bauer, I., and Catanese, F.}
\newblock Burniat surfaces. {II}. {S}econdary {B}urniat surfaces form three connected components of the moduli space.
\newblock {\em Invent. Math. 180}, 3 (2010), 559--588.

\bibitem{zbMATH03232556}
{\sc Burniat, P.}
\newblock Sur les surfaces de genre {{\(P_{12} > 1\)}}.
\newblock {\em Ann. Mat. Pura Appl. (4) 71\/} (1966), 1--24.

\bibitem{MR0755236}
{\sc Catanese, F.}
\newblock On the moduli spaces of surfaces of general type.
\newblock {\em J. Differential Geom. 19}, 2 (1984), 483--515.

\bibitem{MR1718139}
{\sc Catanese, F.}
\newblock Singular bidouble covers and the construction of interesting algebraic surfaces.
\newblock In {\em Algebraic geometry: {H}irzebruch 70 ({W}arsaw, 1998)}, vol.~241 of {\em Contemp. Math.} Amer. Math. Soc., Providence, RI, 1999, pp.~97--120.

\bibitem{MR2465686}
{\sc Cheltsov, I.}
\newblock Log canonical thresholds of del {P}ezzo surfaces.
\newblock {\em Geom. Funct. Anal. 18}, 4 (2008), 1118--1144.

\bibitem{zbMATH05528013}
{\sc Cheltsov, I.~A., and Shramov, K.~A.}
\newblock Log canonical thresholds of smooth {Fano} threefolds.
\newblock {\em Russ. Math. Surv. 63}, 5 (2008), 859--958.

\bibitem{MR4105534}
{\sc Chen, J.~A., Chen, M., and Jiang, C.}
\newblock The {N}oether inequality for algebraic 3-folds.
\newblock {\em Duke Math. J. 169}, 9 (2020), 1603--1645.
\newblock With an appendix by J\'{a}nos Koll\'{a}r.

\bibitem{MR4597209}
{\sc Jiang, C., and Zou, Y.}
\newblock An effective upper bound for anti-canonical volumes of canonical {$\mathbb{Q} $}-{F}ano three-folds.
\newblock {\em Int. Math. Res. Not. IMRN}, 11 (2023), 9298--9318.

\bibitem{MR4216579}
{\sc Kim, I.-K., and Shin, Y.}
\newblock Log canonical thresholds of {B}urniat surfaces with {$K^2 = 6$}.
\newblock {\em Math. Res. Lett. 27}, 4 (2020), 1079--1094.

\bibitem{MR4515703}
{\sc Kim, I.-K., and Shin, Y.}
\newblock Log {C}anonical {T}hresholds on {B}urniat {S}urfaces with {$K^2 = 6$} via {P}luricanonical {D}ivisors.
\newblock {\em Taiwanese J. Math. 26}, 6 (2022), 1133--1144.

\bibitem{MR1492525}
{\sc Koll\'{a}r, J.}
\newblock Singularities of pairs.
\newblock In {\em Algebraic geometry---{S}anta {C}ruz 1995}, vol.~62 of {\em Proc. Sympos. Pure Math.} Amer. Math. Soc., Providence, RI, 1997, pp.~221--287.

\bibitem{MR1704476}
{\sc Kuwata, T.}
\newblock On log canonical thresholds of reducible plane curves.
\newblock {\em Amer. J. Math. 121}, 4 (1999), 701--721.

\bibitem{MR2095471}
{\sc Lazarsfeld, R.}
\newblock {\em Positivity in algebraic geometry. {I}}, vol.~48 of {\em Ergebnisse der Mathematik und ihrer Grenzgebiete. 3. Folge. A Series of Modern Surveys in Mathematics [Results in Mathematics and Related Areas. 3rd Series. A Series of Modern Surveys in Mathematics]}.
\newblock Springer-Verlag, Berlin, 2004.
\newblock Classical setting: line bundles and linear series.

\bibitem{MR1802792}
{\sc Manetti, M.}
\newblock On the moduli space of diffeomorphic algebraic surfaces.
\newblock {\em Invent. Math. 143}, 1 (2001), 29--76.

\bibitem{MR1103912}
{\sc Pardini, R.}
\newblock Abelian covers of algebraic varieties.
\newblock {\em J. Reine Angew. Math. 417\/} (1991), 191--213.

\bibitem{MR444676}
{\sc Peters, C. A.~M.}
\newblock On certain examples of surfaces with {$p_{g}=0$} due to {B}urniat.
\newblock {\em Nagoya Math. J. 66\/} (1977), 109--119.

\end{thebibliography}

\begin{paracol}{2}
\BinAddresses
  \switchcolumn
\JhengJieAddresses
  \switchcolumn
\YongJooAddresses
  \switchcolumn
\end{paracol}

\end{document}